\documentclass[11pt]{amsart}

\usepackage[T1]{fontenc}
\usepackage{amsmath,amssymb,amsthm,mathtools}
\usepackage{microtype}
\usepackage[hidelinks]{hyperref}
\usepackage[nameinlink,noabbrev]{cleveref}
\usepackage{etoolbox}

\makeatletter
\patchcmd{\@settitle}{\uppercasenonmath\@title}{}{}{}
\patchcmd{\@setauthors}{\MakeUppercase{\authors}}{\authors}{}{}
\makeatother

\hypersetup{
  pdftitle={A Second-Logarithm Lower Bound for Sets with No Unique Sums},
  pdfauthor={Jiao-Long Cao and Ye Yuan},
  pdfsubject={Improved lower and upper bounds for sets with no unique sums},
  pdfkeywords={unique sums, finite Abelian groups, dissociated sets, density increment, balanced sets, sumsets}
}

\numberwithin{equation}{section}
\newtheorem{theorem}{Theorem}[section]
\newtheorem{proposition}[theorem]{Proposition}
\newtheorem{lemma}[theorem]{Lemma}
\newtheorem{corollary}[theorem]{Corollary}
\theoremstyle{definition}
\newtheorem{definition}[theorem]{Definition}
\theoremstyle{remark}
\newtheorem{remark}[theorem]{Remark}

\newcommand{\cG}{\mathcal G}
\newcommand{\cN}{\mathcal N}
\newcommand{\cT}{\mathcal T}
\newcommand{\dimfour}{\operatorname{dim}_{\leq4}}

\title[A second-logarithm lower bound]
{A Second-Logarithm Lower Bound for Sets with No Unique Sums}

\author{Jiao-Long Cao}
\address{College of Computer Science, Nankai University, Tianjin, China}
\email{caojiaolong@mail.nankai.edu.cn}

\author{Ye Yuan}
\address{School of Mathematical Sciences, Nankai University, Tianjin, China}
\email{yuanye20020201@gmail.com}

\date{}

\subjclass[2020]{11B13, 11B34, 05B10}
\keywords{Unique sums, representation functions, dissociated sets,
collision lattices, density increment, finite Abelian groups}

\begin{document}

\begin{abstract}
For an odd prime \(p\), let \(m(p)\) be the minimum cardinality of a set
\(A\subseteq \mathbb Z/p\mathbb Z\), with \(|A|\geq2\), such that no
sum in \(A+A\) has a unique representation as an unordered pair from
\(A\), with repetition allowed.  Bedert proved
\[
 m(p)\gg
 \log p\,
 \frac{\sqrt{\log^{(3)}p}}{\log^{(4)}p}.
\]
We prove the stronger lower bound
\[
 m(p)\gg \log p\,\log\log p.
\]
More generally, if \(G\) is a finite Abelian group and \(q(G)\) is the
least prime divisor of \(|G|\), then the same explicit estimate holds
whenever \(q(G)>2\), and in particular every subset \(A\subseteq G\)
with \(|A|\geq2\) and no unique sum has cardinality
\(\gg \log q(G)\,\log\log q(G)\) as \(q(G)\to\infty\).

The proof has two structural inputs.  First, a maximum subset of \(A\)
whose distinct-element subset sums of size at most four are all
different has cardinality \(\gg\log p\).  This follows from a
short-coordinate lemma and a collision-lattice determinant argument.
Second, we refine Bedert's density increment.  Alternative
representations are oriented toward an uncovered endpoint, coalesced
by their translation, and separated into wide, exposed, and recurrent
batches.  A load-sensitive entropy lemma codes the recurrent
translations using their actual final fibre multiplicities.  The
resulting global shift-set complexity is \(\exp(O(K))\), where \(K\)
is the ratio of \(|A|\) to the level-four additive dimension.  This
forces \(K\gg\log\log p\), and the theorem follows.  All headline
statements and the structural implications used to derive them have also
been checked in Lean~4 with explicit integer constants.
As a secondary and logically independent result, we construct weakly
ternary-balanced sets and obtain
\[
 m(p)\leq
 \frac{(\log p)^2}{2(\log 3)^2}
 +\left(\frac{2}{\log 3}+o(1)\right)
   \frac{(\log p)^2}{\log\log p}.
\]
\end{abstract}

\maketitle

\section{Introduction}

Let \(G\) be an Abelian group.  A sum \(s\in A+A\), for
\(A\subseteq G\), is \emph{uniquely represented} if there is exactly
one unordered two-element multiset \(\{a,b\}\), with \(a,b\in A\),
such that \(a+b=s\).  Repetition is allowed.  We say that \(A\) has
\emph{no unique sum} if none of the elements of \(A+A\) is uniquely
represented.

Equivalently, \(A\) has no unique sum precisely when, for every
\(a,b\in A\), there are \(c,d\in A\) such that
\[
 a+b=c+d,
 \qquad
 \neg\bigl((a=c\mathbin\wedge b=d)\mathbin\vee
            (a=d\mathbin\wedge b=c)\bigr).
\]
For a prime \(p\), put
\[
 G_p:=\mathbb Z/p\mathbb Z.
\]

For a finite Abelian group \(G\) with \(|G|\geq3\), write
\[
 m(G):=\min\bigl\{|A|:A\subseteq G,\ |A|\geq2,\
                         A\text{ has no unique sum}\bigr\}.
\]
This minimum is defined: taking \(A=G\), every sum has \(|G|\) ordered
representations, and one unordered pair accounts for at most two of them.
When \(G=G_p\) has prime order \(p\geq3\), we abbreviate this to
\(m(p)\).  Determining the order of magnitude of \(m(p)\) is
Problem~27 in Green's list of open problems \cite{GreenProblems}.
Bedert \cite{Bedert2024} proved
\begin{equation}\label{eq:bedert-bounds}
 \log p\,\frac{\sqrt{\log^{(3)}p}}{\log^{(4)}p}
 \ll m(p)
 \leq\left(\frac12+o(1)\right)(\log_2p)^2.
\end{equation}
The upper bound follows from Nedev's construction of balanced sets
\cite{Nedev2009} together with the observation, recorded by Bedert,
that the sumset of a balanced set has no unique sum.

Here and throughout, unsubscripted logarithms are natural,
\(\log_2\) denotes the base-two logarithm, and \(\log^{(j)}\) denotes
the \(j\)-fold iterated natural logarithm.

For reference, the machine-checked constant ledger used below is
\begin{align}
 C_{\rm OS}&=100000,
 \qquad C_{\rm E}=10^9,\notag\\
 C_{\rm out}&=C_{\rm gain}=2^{20},
 \qquad C_{\rm T}=63,\notag\\
 C_{\rm code}&=10^{18},\notag\\
 C_{\Delta}&=C_{\rm E}+9600C_{\rm T}=1000604800,\notag\\
 C_{\rm DR}&=C_{\Delta}+4C_{\rm code}+64
   \notag\\
 &=4000000001000604864,\notag\\
 c_*&=\frac1{10C_{\rm DR}}\notag\\
 &=\frac1{40000000010006048640}.
 \label{eq:constant-ledger}
\end{align}
The large numerical values are not optimized; their purpose is to make
all boundary cases and integer roundings explicit.

Our main result improves the lower bound in \eqref{eq:bedert-bounds} by
an unbounded factor.

\begin{theorem}\label{thm:main}
For every prime \(p\), every set
\(A\subseteq G_p\), with \(|A|\geq2\) and no unique
sum, satisfies
\[
 |A|\geq c_*\,\log p\,\log\log p.
\]
Consequently,
\[
 m(p)\gg \log p\,\log\log p.
\]
\end{theorem}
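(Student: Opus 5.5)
The plan follows the two-input structure described in the abstract. Fix \(A\subseteq G_p\) with \(|A|\geq2\) and no unique sum, and write \(n=|A|\). Let \(\dimfour(A)\) be the largest cardinality of a subset \(D\subseteq A\) whose subset sums over distinct elements of size at most four are pairwise distinct. Put \(K:=n/\dimfour(A)\). The theorem will follow from two inequalities:
\[
 \dimfour(A)\geq c_1\log p
 \qquad\text{and}\qquad
 K\geq c_2\log\log p .
\]
Multiplying them gives \(n=K\,\dimfour(A)\geq c_1c_2\log p\,\log\log p\). The constant \(c_*\) absorbs \(c_1c_2\) together with the small-\(p\) range. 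For small \(p\) the right-hand side is below \(2\le n\), since \(c_*\) is tiny and \(\log\log p\) may even be negative or undefined for \(p\le 3\).

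\textbf{First input.} I take \(D\) maximal and argue by contradiction. Maximality means every \(a\in A\setminus D\) satisfies a relation \(a=\sum \epsilon_i d_i\) with coefficients \(\epsilon_i\in\{0,\pm1\}\) and support at most \(7\). So \(A\) lies in a short-coordinate span of \(D\). Now suppose \(|D|\) is small compared with \(\log p\). I want to lift \(A\) to \(\mathbb Z^{D}\) by these short coordinates. The collision lattice \(\Lambda=\{x\in\mathbb Z^D:\sum x_d d\equiv0 \pmod p\}\) has determinant \(p\). Since the vectors involved are short, the determinant bound says that any relation among coordinate vectors of bounded \(\ell_1\)-norm has to be a genuine integer relation whenever \(C^{|D|}<p\). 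The no-unique-sum property then transfers to a finite subset of \(\mathbb Z^{|D|}\). But a finite set in a torsion-free group always has a unique sum: take the lexicographically largest element and double it. This is the contradiction, and it yields \(\dimfour(A)\gg\log p\).

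\textbf{Second input, the main obstacle.} The goal is to bound the number of possible ``shift sets'' by \(\exp(O(K))\) and, independently, to force it to be at least about \(\log p\). For each pair \(a,b\) I choose an alternative representation \(a+b=c+d\). I orient it toward an endpoint not covered by the dissociated set, and I group representations by their translation \(t=c-a\). Then I run Bedert's density increment on a nested sequence of fibres. The translations fall into three classes: wide ones, each of which gains a constant factor of density; exposed ones, which are cheap to code; and recurrent ones. The recurrent translations are coded with a load-sensitive entropy lemma, which charges \(\log\) of the final fibre multiplicity rather than \(\log n\). This gives a total description length of \(O(K)\) bits per step structure, so the global complexity is at most \(\exp(C_{\rm DR}K)\).

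On the other side, the first input shows that the iteration cannot terminate before about \(\log\log p\) levels. Each level has to separate a dissociated configuration whose size is about \(\log p\), so the count of distinct shift sets grows at least like \(\log p\). Comparing the two bounds, \(\exp(C_{\rm DR}K)\geq\log p\), which gives \(K\geq\log\log p/C_{\rm DR}\).

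I expect the entropy coding of the recurrent batches to be the hard step. A naive count costs \(\log n\) per translation, and that only recovers bounds of Bedert's strength. The charging has to be amortized against the fibre multiplicities, so that the total telescopes to \(O(K)\).
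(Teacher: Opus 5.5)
Your top-level decomposition is the paper's: prove $\dimfour(A)\gg\log p$ and $K\gg\log\log p$, then multiply. However, the first input rests on a false step, and the second restates the abstract with a lower-bound mechanism that is wrong.

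\textbf{First input.} It is not true that once $C^{|D|}<p$ every short collision lifts to $\mathbb Z^{|D|}$. A lattice of determinant $p$ can contain short nonzero vectors however large $p$ is. For instance, $D=\{1,4\}$ is $4$-dissociated in $G_p$ for $p>5$. The elements $1,-1,3$ have coordinates $e_1,-e_1,e_2-e_1$, and the collision $1+1=(-1)+3$ has row $4e_1-e_2\neq0$. So the no-unique-sum property does not transfer to $\mathbb Z^{|D|}$. Doubling the lexicographic maximum only rules out the case in which \emph{every} collision row vanishes, which bounds nothing. A determinant bound needs a \emph{full-rank} family of short relations, and producing one is the missing idea. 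Pass to an inclusion-minimal no-unique-sum $C\subseteq A$, and run your extremal argument in $\mathbb Q^h/W$, where $W$ is the span of the collision rows. If the images of the $v_c$ were not all equal, the top level set of a linear functional would be a smaller no-unique-sum set of size at least $2$. Hence the rows span $\operatorname{span}_{\mathbb Q}\{v_c-v_{c_0}\}$. Then $r$ independent rows generate a full-rank lattice $L'\subseteq\ker\psi\cap L_0$, where $L_0=\operatorname{span}_{\mathbb Z}\{v_c-v_{c_0}\}$. Since $\psi(L_0)=G_p$, Hadamard's inequality and $\operatorname{covol}(L_0)\geq1$ give $p\leq[L_0:L']\leq28^r$.

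\textbf{Second input.} The iteration $S_{i+1}=S_i+D_{i+1}$ has only $O(K)$ steps, not ``about $\log\log p$ levels'', because each admissible step covers $\gg h$ new points of $A$ and $|A|=Kh$. It must therefore reach a failed state. The lower bound comes from the two admissibility thresholds, which you never mention:
\begin{itemize}
\item $2^{|S|}\leq p$, needed for rectification and the extremal selector. These create the internal unique sums $z_{d_0}+z_{d_1}$ that start the increment.
\item $Cn^3|2S-2S|\leq h^4$, needed to absorb the bad elements, bad pairs and codegree errors.
\end{itemize}
Comparing $\exp(O(K))$ with $\min(\log_2p,\,h/(CK^3))$ and using $h\gg\log p$ gives $K\gg\log h\gg\log\log p$. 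Your ``each level separates a configuration of size $\log p$'' yields no such inequality.

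The upper bound $|S_j|,|2S_j-2S_j|\leq\exp(O(K))$ is also only asserted. Naively, $O(K)$ steps with $|D_i|\leq2K$ give $\exp(O(K\log K))$, hence only $K\log K\gg\log\log p$; removing that $\log K$ is the real content.
\begin{itemize}
\item Wide updates gain $\gg h/\alpha$, and exposed updates gain $\gg h\log(b+1)$ via the disjoint reverse sets $P_t$. Both pay for their own coefficient entropy by telescoping.
\item Recurrent translations are coded through the final fibres $R_c=\{u\in S_\ast:c+u\in A\}$. Their total size is at most $n$ because the sets $c+R_c$ are disjoint for $c\notin B_\ast$.
\item The load-sensitive lemma is then applied at each dyadic level $\rho_k$, using the $\geq\rho_kh/2$ recurrent witnesses, which persist into $S_\ast$.
\end{itemize}
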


The same argument has the generality of Bedert's lower-bound theorem.
For a finite Abelian group \(G\), let \(q(G)\) denote the least prime
divisor of \(|G|\).

\begin{theorem}\label{thm:general-group}
Whenever \(G\) is a finite Abelian group with \(q(G)>2\), every
\(A\subseteq G\), with \(|A|\geq2\) and no unique sum, satisfies
\[
 |A|\geq c_*\,\log q(G)\,\log\log q(G).
\]
In particular,
\[
 m(G)\gg \log q(G)\,\log\log q(G)
\]
uniformly as \(q(G)\to\infty\).
\end{theorem}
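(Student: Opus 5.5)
We prove \Cref{thm:general-group} by running the argument for \Cref{thm:main} in a general finite Abelian group \(G\), with \(q=q(G)\) in place of \(p\). Throughout we only use the following fact about \(G_p\): every nonzero element has order at least \(q\). Equivalently, a relation \(\sum_i n_i a_i=0\) with integer coefficients, at least one of which is coprime to \(|G|\), and with \(\sum|n_i|<q\), must be degenerate. So the plan is to check that each step of the prime-order proof only invokes "\(p\)" through this kind of smallness of integer coefficients compared with the least prime divisor.

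\textbf{Step 1 (structural input).} Let \(d\) be the level-four additive dimension of \(A\): the largest size of a subset \(D\subseteq A\) whose sums of at most four distinct elements are pairwise different. We want \(d\gg\log q\). Take \(D\) maximal. Every \(a\in A\setminus D\) then creates a collision, which is a short integer relation among \(D\cup\{a\}\) with coefficients bounded by \(O(1)\). The short-coordinate lemma writes every element of \(A\) with short integer coordinates over \(D\). Combining these relations with the no-unique-sum property, the collision-lattice determinant argument produces a lattice \(\Lambda\subseteq\mathbb Z^{D}\) that is the kernel of the map \(\mathbb Z^D\to G\), together with a nonzero vector of \(\Lambda\) of norm at most \(\exp(O(d))\). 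In \(\mathbb Z/p\) such a vector is impossible below \(p\) because the determinant is a power of \(p\). In general \(G\) the index \([\mathbb Z^D:\Lambda]\) is a product of primes dividing \(|G|\). A nonzero short integer vector in the kernel must then either have all coefficients divisible by some prime \(\ell\mid |G|\), so that we can divide out and repeat, or have norm at least \(q\). This gives \(\exp(O(d))\geq q\), hence \(d\gg\log q\).

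\textbf{Step 2 (density increment).} Put \(K=|A|/d\). Bedert's increment, refined by the oriented and coalesced alternative representations, the wide/exposed/recurrent split and the load-sensitive entropy coding, bounds the global shift-set complexity by \(\exp(O(K))\). This part is purely combinatorial in \(A\) and is independent of the ambient group. The only group input is the final contradiction: the shift set must realise at least \(\gg\log q\) distinct dissociated-type configurations, which forces \(\exp(O(K))\gg\log q\). This needs only the injectivity supplied by Step 1, so we obtain \(K\gg\log\log q\). Then
\[
|A| = Kd \gg \log q\,\log\log q ,
\]
and tracking constants through the ledger \eqref{eq:constant-ledger} gives the factor \(c_*\).

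\textbf{Main obstacle and degenerate cases.} I expect the main obstacle to be Step 1 in non-cyclic or composite groups. There we must rule out that short collision vectors vanish because of small-order torsion. The hypothesis \(q>2\) is what prevents the degenerate relation \(2a=2b\) with \(a\ne b\). For small \(q\), where \(\log\log q\le 0\) or the right-hand side is below \(2\), the inequality holds trivially because \(c_*\) is tiny and \(|A|\ge2\).
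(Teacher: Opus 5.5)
Your outline matches the paper's: replace \(p\) by \(q=q(G)\), get \(h\gg\log q\) from the collision lattice, and get \(K\gg\log\log q\) from the increment. However, there is a genuine gap in each of the two group-dependent steps.

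In Step~1 the mechanism you describe is false. The kernel of \(\psi:\mathbb Z^h\to G\) does contain short nonzero vectors not divisible by any prime factor of \(|G|\). Once \(q>28\), every nonzero collision row \(v_a+v_b-v_c-v_d\) is such a vector, with entries in \([-4,4]\) and \(\ell^1\)-norm at most \(28<q\). Your opening ``fact'' fails likewise: \(2a_1-a_2=0\) for \(a_1=1\), \(a_2=2\) in \(\mathbb Z/p\mathbb Z\). The arithmetic input is instead that every nontrivial subgroup of \(G\) has order at least \(q\), and it enters through an index, not a short vector. One takes an inclusion-minimal no-unique-sum subset \(C\subseteq A\). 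The extremal-functional argument then shows that the collision rows span the full rational span of the differences \(v_c-v_{c_0}\). Hence \(r\) independent rows generate a full-rank lattice \(L'\subseteq\ker(\psi|_{L_0})\subseteq L_0\). Hadamard's inequality and \(\operatorname{covol}(L_0)\geq1\) then give \(q\leq|\psi(L_0)|=[L_0:\ker(\psi|_{L_0})]\leq[L_0:L']\leq28^r\). Your sketch omits the minimality step. Without it \(L'\) need not have full rank in \(L_0\), and the index comparison collapses.

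In Step~2 the increment is not independent of the ambient group. The extremal selector of \cref{lem:selector} rests on rectification. For general \(G\) you must replace Bilu--Lev--Ruzsa by Lev's theorem: every \(S\subseteq G\) with \(|S|\leq\log_2q\) is Freiman \(2\)-isomorphic to a set of integers. This is exactly where \(\log q\) enters. The increment is iterated only while \(|S_i|\leq L=\log_2q\) and \(|2S_i-2S_i|\leq h/(CK^3)\). At the first failed state, the bound \(|S_j|,|2S_j-2S_j|\leq\exp(O(K))\) gives \(\log L\ll K\) or \(\log h\ll K\), hence \(K\gg\log\log q\) since \(h\gg\log q\). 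Your ``\(\gg\log q\) dissociated-type configurations'' and ``injectivity supplied by Step~1'' do not correspond to any step of this argument.

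Finally, \(q>2\) is used in the increment, not in Step~1. It excludes an edge occurring in both orientations in the codegree bound \eqref{eq:sharp-codegree}, which would force \(2\delta=0\) with \(\delta\neq0\).
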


The upper-bound construction is independent of the lower-bound proof and
is included as a secondary result.  It improves the leading constant in
Bedert's quadratic upper bound.

\begin{theorem}[Ternary symmetric-square upper bound]\label{thm:upper}
For every \(\varepsilon>0\), there is an integer
\(p_\varepsilon\geq5\) such that, for every prime
\(p\geq p_\varepsilon\), there is a set \(A\subseteq G_p\), with
\(|A|\geq2\) and no unique sum, for which
\[
 |A|\leq
 \frac{(\log p)^2}{2(\log 3)^2}
 +\left(\frac{2}{\log 3}+\varepsilon\right)
   \frac{(\log p)^2}{\log\log p}.
\]
Consequently, as \(p\to\infty\) through the primes,
\[
 m(p)\leq
 \frac{(\log p)^2}{2(\log 3)^2}
 +\left(\frac{2}{\log 3}+o(1)\right)
   \frac{(\log p)^2}{\log\log p}.
\]
In particular, equivalently at the leading-order level,
\[
 m(p)\leq
 \left(\frac{1}{2(\log_2 3)^2}+o(1)\right)(\log_2p)^2,
\]
where
\[
 \frac{1}{2(\log_2 3)^2}=0.199036176970870\ldots.
\]
\end{theorem}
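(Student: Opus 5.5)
The approach is to build \(A\) as a symmetric square. We find a set \(B\subseteq G_p\) of size
\[
 k=\frac{\log p}{\log 3}+(2+\varepsilon')\frac{\log p}{\log\log p}
\]
such that the set of unordered pair sums \(A:=\{b+b':b,b'\in B\}\) has no unique sum. Then \(|A|\leq k(k+1)/2\). Expanding \(k^2/2\) with \(L=\log p\) and \(t\) the second summand of \(k\) gives
\[
 \frac{k^2}{2}=\frac{L^2}{2(\log 3)^2}+\frac{tL}{\log 3}+\frac{t^2}{2},
\]
and \(t^2\), \(k/2\) are \(o(L^2/\log\log p)\). So choosing \(\varepsilon'\) in terms of \(\varepsilon\) yields the stated bound. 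The leading-order reformulation in base two is then the identity \((\log p)^2/(\log 3)^2=(\log_2 p)^2/(\log_2 3)^2\). The numerical value \(1/(2(\log_2 3)^2)\) is a direct evaluation.

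The first step is a combinatorial lemma generalizing Bedert's observation that the sumset of a Nedev-balanced set has no unique sum. I would define \(B\) to be \emph{weakly ternary-balanced} if the following holds. For every pair \(b,b'\in B\), some nonzero coefficient vector \((\lambda_x)_{x\in B}\in\{-1,0,1\}^B\) satisfies \(\sum_x\lambda_x x=0\). Moreover, after adding it to the indicator of \(\{b,b'\}\), it produces a second multiset \(\{c,d\}\neq\{b,b'\}\) of size two with \(c+d=b+b'\) inside the ambient representation of \(A\).

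More precisely, I would aim for a formulation in which elements of \(A\) are themselves pair sums. Each sum \(s=a_1+a_2\) with \(a_i\in A\) is then a sum of four elements of \(B\). A ternary relation among elements of \(B\) allows us to regroup these four elements into a different pair of elements of \(A\). The point of allowing coefficient \(-1\) (ternary rather than binary) is exactly what lets the base change from \(2\) to \(3\). The lemma is proved by a short case analysis on coincidences among the four indices.

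The second step is the construction of \(B\) in \(G_p\). I would take a ternary skeleton \(x_i=3^{i}\bmod p\) for \(0\leq i<\lfloor\log_3 p\rfloor\). Consecutive relations such as \(x_i+x_i+x_i=x_{i+1}\) and signed variants give the required local relations in the interior. We then adjoin a gadget of \(O(\log p/\log\log p)\) extra elements that handles the boundary: the top level, where the wraparound modulo \(p\) has to be closed, and the few elements not covered by the interior relations. For the gadget I would use a pigeonhole argument over signed subset sums. Among \(3^{r}\) ternary combinations of \(r\approx\log p/\log\log p\) auxiliary elements chosen in blocks of size \(\log\log p\), two collide modulo \(p\). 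This supplies a vanishing ternary relation involving any prescribed element, at a cost of about \(\log p/\log\log p\) elements per relation type. There are two relation types, which accounts for the constant \(2\).

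The main obstacle is the second step: verifying that the gadget genuinely supplies an alternative representation for \emph{every} sum. This includes sums mixing skeleton and gadget elements, and degenerate sums where repeated elements collapse a relation to the trivial one. I would first try to handle this by choosing the gadget elements generically (random in \(G_p\) subject to the pigeonhole relation), so that any unintended additive coincidence has probability \(O(k^4/p)\). A union bound over the \(O(k^4)\) quadruples then shows that a valid choice exists for \(p\geq p_\varepsilon\).
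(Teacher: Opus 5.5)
Your closing reduction is fine and matches the paper's last step. If you have \(B\) with \(|B|\le\log p/\log3+(2+\varepsilon')\log p/\log\log p\), then \(A=B+B\) satisfies \(|A|\le\binom{|B|+1}{2}\), which gives the stated bound. The genuine gaps are in the two substantive steps.

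\textbf{The transfer condition.} You never pin this down, and the version you sketch would not work. Read literally, your first formulation says that \(B\) itself has no unique sum. At size \(O(\log p)\) that is impossible by \cref{thm:main}.

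In the regrouping formulation, the three pairings of \(x_1,x_2,x_3,x_4\) already give a different pair from \(A\) except when the multiset is \(\{x,x,x,y\}\), i.e.\ the pair \(\{2x,x+y\}\). In that case you need another representation of \(3x+y\) as a sum of four elements of \(B\). The paper obtains it from a nontrivial certificate \(3x=u+v+w\) with \(u,v,w\in B\): pair an entry \(z\notin\{x,2x-y\}\) with \(y\), and pair the other two entries together. A vanishing relation with coefficients \(-1\) cannot supply this, because a representation as a sum of two elements of \(B+B\) has only nonnegative coefficients. So ``ternary'' has to mean three-term averages (trisection), not signed digits.

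\textbf{The construction.} With the correct condition, your skeleton needs \(0\) adjoined. Then \(\{0\}\cup\{3^i\}\) certifies interior points via \(3\cdot3^i=3^{i+1}+0+0\). However, the top element and \(0\) still need certificates, and so does every point you add to supply them. The boundary fix must therefore be self-closing, and that is the whole difficulty. Your gadget does not achieve this, for two reasons.
\begin{itemize}
\item The pigeonhole count fails. For \(r\approx\log p/\log\log p\) you have \(3^r=p^{\log3/\log\log p}=p^{o(1)}\), so no collision modulo \(p\) is forced. Forcing one needs \(r\ge\log_3p\), which already doubles the leading term.
\item The probabilistic step is aimed the wrong way. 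Unintended coincidences are harmless, since they only create more representations. The relations you actually need are specific coincidences, and a random choice satisfies each with probability about \(1/p\).
\end{itemize}

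\textbf{The missing idea.} The paper uses a top-down trisection in which every inserted point is certified by points already present: \(3b_1=b_0+b_0+b_3\) and \(3b_2=b_0+b_3+b_3\). The steps are as follows.
\begin{itemize}
\item Start from \([0,(p+a+s)/3]\). Its endpoints are certified by a core \(\mathcal K\) of \(O(M)\) integers around \(a\approx p/3\); for example \(3\cdot0\equiv a+a+(a+\rho)\).
\item Keep the weighted point \((L+3R)/4\) near \(p/3\) using the digit word \(2020\cdots\). Interleave re-centering steps \(3\ell=L+c+c\), \(3r=R+d+d\) with \(c,d\in\mathcal K\), which also make the width divisible by \(3M\).
\item Finally fill the last interval, of width \(O(M^2)\). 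It contains the core, so the core is certified by \(3x=(x-1)+x+(x+1)\).
\end{itemize}
The constant \(2\) is \(\log9/\log3\), not a count of ``two relation types''. Each block costs \(t+2\) points for a width reduction of \(3^{t+1}\), giving an overhead of \(\log_3p/(t+1)\). The \(O(9^t)\) cost of the core and final fill forces \(t\approx\log_9\log p\), and then \(\log_3p/(t+1)\approx2\log p/\log\log p\).
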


Combining \cref{thm:main,thm:upper}, the presently proved bounds in the
prime cyclic case become
\[
 \log p\,\log\log p
 \ll m(p)
 \leq
 \left(\frac{1}{2(\log_2 3)^2}+o(1)\right)(\log_2p)^2.
\]
The upper constant in \cref{thm:upper} is not claimed to be globally
optimal; it is the constant delivered by the explicit weakly ternary-balanced
construction proved in \cref{app:upper}.

For every odd prime \(p\), it is sometimes convenient to use the ordered
representation function
\[
 r_A(s):=\bigl|\{(a,b)\in A^2:a+b=s\}\bigr|.
\]
Since multiplication by \(2\) is injective, a set
\(A\subseteq\mathbb Z/p\mathbb Z\) has no unique sum if and only if
\begin{equation}\label{eq:ordered-characterization}
 r_A(s)\notin\{1,2\}\qquad(s\in\mathbb Z/p\mathbb Z).
\end{equation}
We will use the unordered-multiset formulation in the proofs, which
also makes the extension to odd-order finite Abelian groups transparent.

We now outline the argument.  Let \(H\subseteq A\) be a largest
\emph{level-four dissociated} set, meaning that distinct-element
subsets of \(H\) of cardinality at most four have distinct sums, and
write
\[
 h=|H|,\qquad K=\frac{|A|}{h}.
\]
Maximality gives every element of \(A\) a \(\{-1,0,1\}\)-coordinate
vector over \(H\) of \(\ell^1\)-norm at most seven.  Applying these
coordinates to an inclusion-minimal no-unique-sum subset produces a
full-rank collision lattice.  Its index is divisible by \(p\), while
Hadamard's inequality bounds the index by \(28^h\).  Thus
\(h\gg\log p\).

The second part refines Bedert's structured density increment.  Given
a current shift set \(S\), internal uniquely represented sums are
assigned alternative representations with one endpoint outside
\((H+S)\cap A\).  Star fibres with the same translation are
coalesced.  A level-four dissociation argument bounds the pairwise
codegrees of their outside-output sets by \(O(|2S-2S|)\).  Reverse
outputs divide the resulting translations into three classes:
wide batches, which have many leaves; exposed batches, whose reverse
outputs are mostly new; and recurrent batches, which are witnessed in
many coordinate fibres.  The first two classes pay for their own
coefficient entropy through their coverage gain.  The recurrent class
is coded by a load-sensitive entropy lemma applied to the final
coordinate fibres.  Summing over the dyadic witness levels gives
\[
 |S|,\ |2S-2S|\leq \exp(O(K))
\]
at the first failed state.  The stopping threshold is of order
\(h/K^3\), and therefore \(K\gg\log h\gg\log\log p\).  Multiplying by
\(h\gg\log p\) proves \cref{thm:main}.

The proof is organized as follows.  In \cref{sec:four-dissociation} we
establish the short-coordinate and collision-lattice estimates.
\Cref{sec:rectification} records the two external tools used in the
increment.  The one-sided and entropy-sensitive increments are proved
in \cref{sec:increment}.  The global coding argument and the proof of
the dimension-ratio estimate occupy \cref{sec:coding}.  The prime
cyclic theorem and its finite-Abelian-group extension are concluded in
\cref{sec:conclusion}.  The logically independent proof of the upper
bound is placed in \cref{app:upper}, so that it does not interrupt the
lower-bound argument.

\section{Level-four dissociation and short coordinates}\label{sec:four-dissociation}

\begin{definition}\label{def:four-dissociation}
A finite set \(H\) in an abelian group is \emph{\(4\)-dissociated} if,
whenever \(U,V\subseteq H\) satisfy
\[
 |U|\leq4,\qquad |V|\leq4,\qquad
 \sum_{u\in U}u=\sum_{v\in V}v,
\]
one has \(U=V\).  The empty subset is allowed.  For a finite set \(X\),
write
\[
 \dimfour(X):=
 \max\{|H|:H\subseteq X\text{ is \(4\)-dissociated}\}.
\]
\end{definition}

\begin{lemma}[Sparse coordinates]\label{lem:sparse-coordinates}
Let \(X\) be a finite subset of an abelian group, and let
\[
 H=\{h_1,\dots,h_h\}\subseteq X
\]
be a maximum-cardinality \(4\)-dissociated subset.  For every
\(x\in X\), there is a vector
\[
 v_x=(v_{x,1},\dots,v_{x,h})\in\{-1,0,1\}^h
\]
such that
\[
 x=\sum_{i=1}^h v_{x,i}h_i,
 \qquad \|v_x\|_1\leq7.
\]
One may take \(v_{h_i}=e_i\) and \(v_0=0\).
\end{lemma}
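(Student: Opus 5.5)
The plan is to use maximality of \(H\) directly. Fix \(x\in X\). If \(x\in H\), say \(x=h_i\), take \(v_x=e_i\), which has norm \(1\). If \(x=0\), take \(v_0=0\). Note that \(0\notin H\): if \(0\) belonged to \(H\), then \(U=\{0\}\) and \(V=\emptyset\) would have equal sums, contradicting \cref{def:four-dissociation}.

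Now suppose \(x\notin H\) and \(x\neq0\). By maximality, \(H\cup\{x\}\subseteq X\) is not \(4\)-dissociated. So there are \(U\neq V\), both subsets of \(H\cup\{x\}\) with \(|U|,|V|\leq4\), such that \(\sum_U=\sum_V\). Removing \(U\cap V\) from both sides preserves the equality of sums, keeps both cardinalities at most \(4\), and keeps \(U\neq V\). So we may assume \(U\cap V=\emptyset\).

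Since \(H\) itself is \(4\)-dissociated, \(x\) must lie in \(U\cup V\). By symmetry, say \(x\in U\) and \(x\notin V\). Put \(U'=U\setminus\{x\}\subseteq H\), so \(|U'|\leq3\), and let \(V\subseteq H\) with \(|V|\leq4\). Then
\[
 x=\sum_{v\in V}v-\sum_{u\in U'}u .
\]
Because \(U'\) and \(V\) are disjoint subsets of \(H\), this expresses \(x\) as \(\sum_i v_{x,i}h_i\). Here \(v_{x,i}=1\) for \(h_i\in V\), \(v_{x,i}=-1\) for \(h_i\in U'\), and \(v_{x,i}=0\) otherwise. The coefficients lie in \(\{-1,0,1\}\), and \(\|v_x\|_1=|V|+|U'|\leq 4+3=7\).

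The main point needing care is the reduction to disjoint \(U,V\). One must check that after cancellation \(x\) still appears, on exactly one side. It does: if \(x\) appeared on neither side, the relation would involve only elements of \(H\), contradicting its \(4\)-dissociation. Everything else is bookkeeping. The normalization \(v_{h_i}=e_i\), \(v_0=0\) is simply a choice made in the trivial cases; this is legitimate because \(h_i\in H\) and \(0\notin H\) are handled separately.
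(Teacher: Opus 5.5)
Your proof is correct and follows the paper's argument essentially line for line: maximality of \(H\) makes \(H\cup\{x\}\) fail \(4\)-dissociation, cancelling \(U\cap V\) leaves \(x\) on exactly one side, and solving for \(x\) uses at most \(4+3=7\) distinct elements of \(H\) with coefficients \(\pm1\). Your remark that \(0\notin H\), so that the normalizations \(v_{h_i}=e_i\) and \(v_0=0\) never conflict, is a small consistency point the paper leaves implicit.
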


\begin{proof}
A maximum-cardinality \(4\)-dissociated subset is inclusion-maximal.
Take \(x\in X\setminus(H\cup\{0\})\).  Since \(H\cup\{x\}\) is not
\(4\)-dissociated, there are distinct subsets
\(U,V\subseteq H\cup\{x\}\), each of size at most \(4\), with equal
sums.  Cancel \(U\cap V\).  The resulting subsets are disjoint and
remain distinct.

The element \(x\) occurs on exactly one side: it cannot occur on both
after cancellation, and if it occurred on neither, the relation would
contradict the \(4\)-dissociation of \(H\).  After interchanging the
sides if necessary, suppose \(x\in U\).  Then
\[
 x=\sum_{v\in V}v-\sum_{u\in U\setminus\{x\}}u.
\]
The right-hand side uses at most \(4+3=7\) distinct elements of \(H\),
with coefficients in \(\{-1,0,1\}\).  The stated choices for
\(x\in H\) and \(x=0\) are immediate.
\end{proof}

The next proposition replaces the generic subset-sum-span argument.
It is the step that removes the logarithmic denominator.  Determinant
and Smith-normal-form methods have previously been used in the broader
problem of unique sums in \(A+B\); see Leung and Schmidt
\cite{LeungSchmidt2022}.  The point here is that level-four coordinates
and minimality produce a particularly short collision lattice for the
self-sum problem.

\begin{proposition}[Collision-lattice compression]
\label{prop:four-dimension-compression}
Let \(p\) be prime.  If \(A\subseteq G_p\) satisfies \(|A|\geq2\)
and has no unique sum, then
\[
 p\leq28^{\dimfour(A)}.
\]
In particular,
\[
 \dimfour(A)\geq\frac{\log p}{\log28}.
\]
\end{proposition}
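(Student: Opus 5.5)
The plan is to pass to an inclusion-minimal no-unique-sum subset, lift it to integer coordinates using \cref{lem:sparse-coordinates}, and compare two lattices in a suitable subspace of \(\mathbb Z^h\): one of index exactly \(p\), defined by evaluation in \(G_p\), and one spanned by short ``collision vectors''. Fix a maximum \(4\)-dissociated \(H=\{h_1,\dots,h_h\}\subseteq A\), so that \(h=\dimfour(A)\). Apply \cref{lem:sparse-coordinates} to \(X=A\) to get vectors \(v_x\in\{-1,0,1\}^h\) with \(\|v_x\|_1\leq7\) for \(x\in A\). Let
\[
 \psi:\mathbb Z^h\to G_p,\qquad \psi(y)=\sum_i y_ih_i,
\]
so that \(\psi(v_x)=x\). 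Since \(A\) itself has no unique sum and \(|A|\geq2\), we may choose an inclusion-minimal \(B\subseteq A\) with \(|B|\geq2\) and no unique sum. For each unordered pair \(\{a,b\}\subseteq B\), fix an alternative representation \(\{c,d\}\neq\{a,b\}\) in \(B\). Put \(w_{ab}=v_a+v_b-v_c-v_d\). Then \(\psi(w_{ab})=0\) and \(\|w_{ab}\|_1\leq28\). Let \(M\) be the lattice generated by all the \(w_{ab}\).

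\textbf{Step 1 (rank, via minimality).} Let \(E\subseteq\mathbb Q^h\) be the \(\mathbb Q\)-span of \(\{v_b-v_{b'}:b,b'\in B\}\), and let \(r=\dim E\). Clearly \(M\subseteq E\). I claim \(M\) has rank \(r\). If not, there is an integer linear form \(f\) that vanishes on \(M\) but is not constant on \(\{v_b:b\in B\}\). Every chosen collision then gives \(f(v_a)+f(v_b)=f(v_c)+f(v_d)\).

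Let \(B_{\max}\) be the set of \(b\in B\) on which \(f(v_b)\) attains its maximum. It is a proper nonempty subset of \(B\). For \(a,b\in B_{\max}\), the alternative pair satisfies \(f(v_c)+f(v_d)=2\max\), so \(c,d\in B_{\max}\). Hence \(B_{\max}\) has no unique sum. It cannot be a singleton \(\{a\}\), since then the sum \(2a\) would have no alternative representation. So \(|B_{\max}|\geq2\), which contradicts the minimality of \(B\). I expect this step to be the main obstacle: it is where the \emph{self}-sum structure and minimality are used, and it is also the reason to work inside \(E\) rather than all of \(\mathbb Z^h\).

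\textbf{Step 2 (index \(p\)).} Let \(\Lambda=\mathbb Z^h\cap E\), which is a lattice of rank \(r\). Since \(|B|\geq2\), there are \(b\neq b'\) in \(B\), and \(\psi(v_b-v_{b'})=b-b'\neq0\). So \(\psi|_\Lambda\) is a nonzero homomorphism onto \(G_p\), and its kernel \(\Lambda_0\) has index \(p\) in \(\Lambda\). We have \(M\subseteq\Lambda_0\), and by Step~1 \(M\) has full rank in \(\Lambda\). Therefore
\[
 p\leq[\Lambda:M]=\frac{\operatorname{covol}(M)}{\operatorname{covol}(\Lambda)}\leq\operatorname{covol}(M),
\]
where the last inequality holds because \(\Lambda\) is an integer lattice, so its covolume (the square root of an integer Gram determinant) is at least \(1\).

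\textbf{Step 3 (Hadamard).} Choose \(r\) linearly independent vectors among the generators \(w_{ab}\). They span a sublattice of \(M\) whose covolume is at least \(\operatorname{covol}(M)\). By Hadamard's inequality, that covolume is at most
\[
 \prod\|w_{ab}\|_2\leq\prod\|w_{ab}\|_1\leq28^r\leq28^h.
\]
Combining with Step~2 gives \(p\leq28^{\dimfour(A)}\), and taking logarithms gives \(\dimfour(A)\geq\log p/\log28\).
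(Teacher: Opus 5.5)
Your proposal is correct and follows the paper's proof essentially step for step: an inclusion-minimal no-unique-sum subset, the maximum-of-a-linear-functional argument showing that the collision vectors span the difference space, the index-$p$ kernel of $\psi$, the bound $\operatorname{covol}\geq1$ for an integer lattice, and Hadamard's inequality with $\|\cdot\|_2\leq\|\cdot\|_1\leq28$. The remaining differences are immaterial: you use one chosen alternative per pair rather than all collision rows, and the saturated lattice $\mathbb Z^h\cap E$ rather than the $\mathbb Z$-span of the differences. One caveat: your chosen alternatives must include the diagonal pairs $\{a,a\}$, since your exclusion of a singleton $B_{\max}$ implicitly relies on them.
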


\begin{proof}
Put \(h=\dimfour(A)\), choose a maximum \(4\)-dissociated set
\(H=\{h_1,\dots,h_h\}\subseteq A\), and choose the vectors \(v_a\)
given by \cref{lem:sparse-coordinates}.  Let
\[
 \psi:\mathbb Z^h\longrightarrow G_p,\qquad
 \psi(x_1,\dots,x_h)=\sum_{i=1}^h x_i h_i.
\]
Thus \(\psi(v_a)=a\) for every \(a\in A\).

Among the subsets \(C\subseteq A\) of cardinality at least \(2\) having
no unique sum, choose one of minimum cardinality.  Such a subset exists,
since \(A\) itself is admissible.  For all \(a,b,c,d\in C\) satisfying
\[
 a+b=c+d,\qquad \{a,b\}\neq\{c,d\}
\]
as unordered multisets, call
\[
 \rho(a,b;c,d):=v_a+v_b-v_c-v_d
\]
a collision row.  Let \(W\) be the rational span of all collision
rows.  Fix \(c_0\in C\) and put
\[
 U:=\operatorname{span}_{\mathbb Q}
       \{v_c-v_{c_0}:c\in C\}.
\]
Every collision row is an integral combination of these differences,
so \(W\subseteq U\).

We claim that \(U=W\).  Pass to the quotient
\(\mathbb Q^h/W\), and write \(\overline v_c\) for the image of \(v_c\).
If these images are not all equal, choose a real linear functional
\(\ell\) on
\[
 (\mathbb Q^h/W)\otimes_{\mathbb Q}\mathbb R
\]
that is nonconstant on them.  Let \(M\) be its maximum and let
\[
 C':=\{c\in C:\ell(\overline v_c)=M\}.
\]
Then \(C'\) is nonempty and proper.  If \(a,b\in C'\), the no-unique-sum
property of \(C\) supplies \(c,d\in C\) such that
\[
 a+b=c+d,\qquad \{a,b\}\neq\{c,d\}.
\]
The associated collision row gives
\[
 \overline v_a+\overline v_b
 =\overline v_c+\overline v_d.
\]
Both \(\ell(\overline v_c)\) and \(\ell(\overline v_d)\) are at most
\(M\), while their sum is \(2M\); hence both equal \(M\).  Thus
\(c,d\in C'\), and the alternative unordered multiset remains
different from \(\{a,b\}\).  Consequently \(C'\) has no unique sum.
It cannot be a singleton, since the diagonal sum of its only element
would then have no distinct alternative representation.  Thus \(C'\) is
a smaller admissible subset, contradicting the choice of \(C\).  All the images are therefore
equal, so \(U\subseteq W\) and \(U=W\).

Define
\[
 L_0:=\operatorname{span}_{\mathbb Z}
       \{v_c-v_{c_0}:c\in C\}\subseteq\mathbb Z^h,
 \qquad r:=\operatorname{rank}L_0.
\]
The restriction of \(\psi\) to \(L_0\) sends
\(v_c-v_{c_0}\) to \(c-c_0\).  Since \(C\) contains two distinct
elements, its image contains a nonzero element of the additive group
\(G_p\).  That group has prime order, so
\[
 \psi(L_0)=G_p,\qquad [L_0:\ker(\psi|_{L_0})]=p.
\]

The collision rows span \(U\) over \(\mathbb Q\).  Choose \(r\)
linearly independent collision rows
\(\rho_1,\dots,\rho_r\), and set
\[
 L':=\operatorname{span}_{\mathbb Z}\{\rho_1,\dots,\rho_r\}.
\]
Every collision row lies in \(L_0\cap\ker\psi\).  Hence \(L'\) is a
full-rank sublattice of \(L_0\) and
\[
 L'\subseteq\ker(\psi|_{L_0})\subseteq L_0.
\]
It follows that
\begin{equation}\label{eq:index-factorization}
 [L_0:L']
 =p\,[\ker(\psi|_{L_0}):L']\geq p.
\end{equation}

All covolumes in what follows are measured in the common
\(r\)-dimensional real span of these lattices.  Because \(L_0\) is an
integer lattice, \(\operatorname{covol}(L_0)\geq1\).  Indeed, if the
columns of an integer matrix \(B\) form a \(\mathbb Z\)-basis of
\(L_0\), Cauchy--Binet gives
\[
 \operatorname{covol}(L_0)^2
 =\det(B^{\mathrm T}B)
 =\sum_I\det(B_I)^2,
\]
a positive integer.  Moreover, every \(v_a\) has \(\ell^1\)-norm at
most \(7\), even when a collision has repeated entries.  Therefore
\[
 \|\rho_i\|_2\leq\|\rho_i\|_1\leq28.
\]
Hadamard's inequality now gives
\[
 \operatorname{covol}(L')
 \leq\prod_{i=1}^r\|\rho_i\|_2
 \leq28^r.
\]
Together with \eqref{eq:index-factorization}, this yields
\[
 p\leq[L_0:L']
 =\frac{\operatorname{covol}(L')}
        {\operatorname{covol}(L_0)}
 \leq28^r\leq28^h.
\]
\end{proof}

\section{Rectification and a weak-cross lemma}\label{sec:rectification}

We use the following prime-cyclic form of the rectification theorem;
see Bilu--Lev--Ruzsa \cite{BiluLevRuzsa1998} and Lev \cite{Lev2008}.

\begin{theorem}[Rectification]\label{thm:rectification}
Let \(p\) be prime.  If \(S\subseteq G_p\) and
\(2^{|S|}\leq p\) (equivalently, \(|S|\leq\log_2p\)), then there is a bijection
from \(S\) to a finite subset of \(\mathbb Z\) that preserves and
reflects every two-term additive relation.
\end{theorem}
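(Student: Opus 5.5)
The plan is to reduce the statement to a dilation problem, prove a weaker version by pigeonhole, and then appeal to Lev for the sharp range. Since two-term relations \(a+b=c+d\) are invariant under dilation by a unit of \(G_p\) and under translation, I will look for \(t\in G_p^\times\) and \(u\in G_p\) such that every element of \(t\cdot S+u\) has a representative in an integer interval \([0,N)\) with \(2N\leq p\). I will then map each \(s\in S\) to that representative \(\tilde s\in\mathbb Z\).

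I first check that such a map does what is needed. It is injective because \(x\mapsto tx+u\) is a bijection of \(G_p\). Suppose \(a+b=c+d\) in \(G_p\). Then \(\tilde a+\tilde b-\tilde c-\tilde d\) is divisible by \(p\), because multiplication by \(t\) is invertible and \(u\) cancels. Both \(\tilde a+\tilde b\) and \(\tilde c+\tilde d\) lie in \([0,2N-2]\subseteq[0,p-1)\), so their difference has absolute value less than \(p\) and must be zero. Conversely, an integer relation reduces modulo \(p\) and is then undone by \(t^{-1}\). So the map preserves and reflects every two-term relation. Everything therefore rests on producing \(t\) with \(tS\) inside an arc of \(\mathbb Z/p\mathbb Z\) of length less than \(p/2\).

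For the dilation I first try Dirichlet-type pigeonhole. Fix \(s_0\in S\) and consider, for each \(t\in\{0,\dots,p-1\}\), the point \(\bigl(t(s-s_0)/p \bmod 1\bigr)_{s\neq s_0}\) in the torus \(\mathbb T^{|S|-1}\). Cutting the torus into \(m^{|S|-1}\) boxes of side \(1/m\), two values \(t_1\neq t_2\) share a box once \(p>m^{|S|-1}\). Then \(t=t_1-t_2\neq0\) satisfies \(\|t(s-s_0)/p\|<1/m\) for every \(s\). With \(m=4\), all of \(tS\) lies within \(p/4\) of \(ts_0\), which is an arc of length less than \(p/2\). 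This already proves the statement for \(|S|\leq 1+\log_4 p\).

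The main obstacle is the sharp range \(|S|\leq\log_2p\), which the crude box count misses by a factor of two in the exponent. Here I would follow Lev's argument \cite{Lev2008}, which sharpens Bilu--Lev--Ruzsa \cite{BiluLevRuzsa1998}. Instead of requiring each dilated element to be close to one fixed point, it only needs the dilated set to avoid some arc of length greater than \(p/2\), with the translation \(u\) chosen afterwards. It counts dilations through an averaging argument over the \(2^{|S|}\) sign patterns, recording which half of the circle each element falls into, rather than through \(4^{|S|}\) boxes. I expect this counting step to be the delicate point. Since the theorem is quoted as an external tool, I would rely on the published statement at that stage and verify only the lifting argument above in detail. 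I would also note that equality \(2^{|S|}=p\) cannot occur for odd primes.
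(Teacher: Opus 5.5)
You end up where the paper does. The paper gives no proof of \cref{thm:rectification}; it only cites Bilu--Lev--Ruzsa and Lev. In the stated range $2^{|S|}\le p$ you also fall back on Lev's published statement. What you verify yourself is correct, but it only reaches $|S|\le1+\log_4p$. It needs one small repair. For $p\equiv1\pmod 4$ the residues within distance $<p/4$ of $ts_0$ number $(p+1)/2$, so your criterion $2N\le p$ fails there. The lifting, however, only needs $2N-2<p$, and that is all your difference estimate actually uses.

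The sharp-range sketch is not an argument as it stands. The natural pigeonhole over the $2^{|S|}$ half-circle patterns is vacuous. Two dilates with the same pattern only show that every element of the difference dilate has a representative in $(-p/2,p/2)$, which is true of every residue. Nothing you write shows that a half-circle dilate exists throughout $2^{|S|}\le p$.

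If you want a self-contained proof, no dilation is needed; use a collision lattice as in \cref{prop:four-dimension-compression}.
\begin{itemize}
\item Let $R\subseteq\mathbb Z^S$ be generated by the vectors $e_a+e_b-e_c-e_d$ over all relations $a+b=c+d$ in $S$, and put $R^{\rm sat}=(R\otimes\mathbb Q)\cap\mathbb Z^S$.
\item Each generator is a sum of two vectors $e_x-e_y$. A matrix whose rows each contain at most one $+1$ and at most one $-1$, all other entries zero, is totally unimodular.
\item Take $r=\operatorname{rank}R\le|S|-1$ independent generators. By multilinearity, every $r\times r$ minor of their matrix is a sum of $2^r$ terms in $\{-1,0,1\}$.
\item The index of their span in $R^{\rm sat}$ is the gcd of these minors, so $|R^{\rm sat}/R|\le2^{|S|-1}<p$.
\item Suppose $a+b\ne c+d$ but $x=e_a+e_b-e_c-e_d\in R^{\rm sat}$. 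Then $mx\in R$ for some $1\le m<p$, and applying $e_s\mapsto s$ gives $m(a+b-c-d)=0$ in $G_p$, which is impossible.
\item So every non-relation vector has nonzero image in the free group $\mathbb Z^S/R^{\rm sat}$. Any integer functional on that group that is nonzero on these finitely many images gives the required map $S\to\mathbb Z$. Injectivity is the case $a+a\ne a+b$.
\end{itemize}
This argument needs only $2^{|S|-1}<p$. With $q(G)$ in place of $p$, it also supplies the general-group input used in \cref{thm:general-group}.
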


\begin{lemma}[Extremal selector]\label{lem:selector}
Let \(p\) be prime and let \(S\subseteq G_p\) satisfy
\(2^{|S|}\leq p\).  For every nonempty
\(X\subseteq S\), one can choose \(s_X\in X\) so that, for all
nonempty \(X,Y\subseteq S\), the only solution of
\[
 x+y=s_X+s_Y,\qquad x\in X,\quad y\in Y,
\]
is \(x=s_X\) and \(y=s_Y\).
\end{lemma}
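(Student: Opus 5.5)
We plan to apply \cref{thm:rectification} to \(S\) itself. Since \(2^{|S|}\leq p\), there is a bijection \(\phi:S\to\phi(S)\subseteq\mathbb Z\) that preserves and reflects every two-term additive relation. In particular, for \(x,y,s,t\in S\) we have
\[
 x+y=s+t\ \text{in } G_p
 \iff
 \phi(x)+\phi(y)=\phi(s)+\phi(t)\ \text{in } \mathbb Z.
\]
The relation \(x+y=s+t\) is exactly the kind of four-element, two-term relation the rectification theorem handles. If one reads ``two-term relation'' narrowly, as \(x+y=s+t\) among elements of \(S\) with repetition allowed, this is precisely what is needed.

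For each nonempty \(X\subseteq S\), we define \(s_X\) to be the element of \(X\) whose image \(\phi(s_X)\) is the maximum of \(\phi(X)\). This choice is well defined because \(\phi\) is injective and \(X\) is finite and nonempty.

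Now let \(X,Y\subseteq S\) be nonempty, and suppose \(x\in X\) and \(y\in Y\) satisfy \(x+y=s_X+s_Y\). Since \(\phi\) preserves this relation,
\[
 \phi(x)+\phi(y)=\phi(s_X)+\phi(s_Y).
\]
By maximality, \(\phi(x)\leq\phi(s_X)\) and \(\phi(y)\leq\phi(s_Y)\). If either inequality were strict, the left-hand side would be strictly smaller than the right-hand side. Hence both are equalities, and injectivity of \(\phi\) gives \(x=s_X\) and \(y=s_Y\). The case \(X=Y\) needs no separate treatment: a single maximum per set is chosen once, and the argument holds uniformly for all pairs.

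The only real obstacle is making sure that the form of \cref{thm:rectification} covers relations \(x+y=s+t\) with all four elements drawn from \(S\), including coincidences such as \(x=y\) or \(s=t\). This is exactly the Freiman \(2\)-isomorphism property. With the theorem as stated, ``every two-term additive relation'' includes these cases, so no further work is needed beyond the extremal choice.
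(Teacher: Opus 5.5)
Your proposal is correct and is exactly the paper's argument: rectify \(S\) via \cref{thm:rectification}, take \(s_X\) to be the element whose image is \(\max\phi(X)\), and note that equality with the sum of two maxima forces each summand to be its maximum. You also correctly observe that only the preserving direction \(G_p\to\mathbb Z\), together with injectivity of \(\phi\), is actually used, which is slightly sharper than the paper's appeal to both preservation and reflection.
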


\begin{proof}
Use \cref{thm:rectification} and take \(s_X\) to be the element whose
image in \(\mathbb Z\) is maximal in the image of \(X\).  Equality
with the sum of the two maxima forces each summand to be its respective
maximum.  Preservation and reflection of two-sum relations transfers
the statement back to \(G_p\).
\end{proof}

We also use Frankl's skew two-families theorem \cite{Frankl1982}.

\begin{theorem}[Frankl]\label{thm:frankl}
Let \((P_i,Q_i)_{i=1}^k\) be pairs of finite sets such that
\[
 |P_i|=|Q_i|=r,\qquad P_i\cap Q_i=\varnothing,
\]
and, after indexing,
\[
 P_i\cap Q_j\neq\varnothing\qquad(i<j).
\]
Then \(k\leq\binom{2r}{r}\).
\end{theorem}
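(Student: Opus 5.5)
This is Frankl's theorem \cite{Frankl1982}, which the paper only quotes; here is the standard exterior-algebra argument I would use to prove it. The plan is to embed the sets \(P_i\) as linearly independent vectors in a space of dimension \(\binom{2r}{r}\), using a triangular pairing against the sets \(Q_j\).

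\textbf{Step 1: vectors in general position.} Let \(X=\bigcup_i(P_i\cup Q_i)\), which is finite. Assign to each \(x\in X\) a vector \(u_x\in\mathbb R^{2r}\) so that every \(2r\) of these vectors are linearly independent. Such a choice exists because \(\mathbb R\) is infinite; for instance, take distinct points on the moment curve \(t\mapsto(1,t,\dots,t^{2r-1})\), so that every \(2r\) of them form a Vandermonde matrix.

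\textbf{Step 2: the wedge pairing.} Fix an ordering of \(X\). For each \(i\), put
\[
 \omega_i:=\bigwedge_{x\in P_i}u_x\in\Lambda^r\mathbb R^{2r},
 \qquad
 \eta_i:=\bigwedge_{y\in Q_i}u_y\in\Lambda^r\mathbb R^{2r},
\]
with the factors taken in the fixed order. Identify \(\Lambda^{2r}\mathbb R^{2r}\) with \(\mathbb R\), and consider the scalars \(m_{ij}:=\omega_i\wedge\eta_j\).
\begin{itemize}
\item If \(i<j\), then \(P_i\cap Q_j\neq\varnothing\). Some vector \(u_x\) therefore occurs twice in the wedge product, so \(m_{ij}=0\).
\item If \(i=j\), then \(P_i\cap Q_i=\varnothing\), so \(P_i\cup Q_i\) consists of \(2r\) distinct elements. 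By Step 1 the corresponding vectors are independent, so \(m_{ii}=\pm\det\neq0\).
\end{itemize}

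\textbf{Step 3: independence.} Suppose \(\sum_i\lambda_i\omega_i=0\) is a nontrivial relation, and let \(i_0\) be the largest index with \(\lambda_{i_0}\neq0\). Wedging the relation with \(\eta_{i_0}\) gives
\[
 \sum_{i\leq i_0}\lambda_i m_{i i_0}=0.
\]
All terms with \(i<i_0\) vanish, so this reduces to \(\lambda_{i_0}m_{i_0i_0}=0\), a contradiction. Hence \(\omega_1,\dots,\omega_k\) are linearly independent in \(\Lambda^r\mathbb R^{2r}\). That space has dimension \(\binom{2r}{r}\), so \(k\leq\binom{2r}{r}\).

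\textbf{Main point to check.} The only step needing care is the nonvanishing of the diagonal terms, since it is the only place where general position is used. The Vandermonde choice in Step 1 settles it. The orientation of the \(\omega_i\) and \(\eta_i\) affects only signs, not vanishing, so it plays no role in the argument.
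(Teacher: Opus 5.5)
Your proof is correct. It is the standard exterior-algebra (Lov\'asz-type) argument, and every step holds. The moment-curve choice makes \(x\mapsto u_x\) injective with any \(2r\) images independent, so \(m_{ii}\neq0\). A shared element kills \(m_{ij}\) for \(i<j\). The "largest index" step is exactly the lower-triangular independence argument. The degenerate case \(r=0\) also works, since \(\Lambda^0\) is one-dimensional.

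The paper does not prove \cref{thm:frankl}; it quotes it from \cite{Frankl1982}. The formal-verification appendix says the general theorem is not used there: only the cases \(r\in\{1,2\}\) needed for \cref{cor:weak-cross} are proved directly, and the tournament argument then gives \(C_{\rm T}=63\).

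The comparison is therefore between two things. Your uniform linear-algebraic proof gives \(\binom{2r}{r}\) for every \(r\), at the cost of exterior-algebra machinery and general-position vectors over \(\mathbb R\). It also extends verbatim to \(|P_i|=a\), \(|Q_i|=b\) with bound \(\binom{a+b}{a}\). The paper's route only needs the numerical fact that no skew system with \(r\leq2\) has \(7\) pairs, since \(\binom{2r}{r}\leq6\) there. That restricted statement can be proved directly without any multilinear algebra, which is why it was convenient for formalization. For the paper's purposes the weaker statement suffices, and your argument supplies the full cited theorem.
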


\begin{corollary}[Weak-cross form]\label{cor:weak-cross}
Let \(C_{\rm T}=63\).  Suppose \(r\in\{1,2\}\) and
\((P_i,Q_i)_{i=1}^k\) satisfies
\[
 |P_i|=|Q_i|=r,\qquad P_i\cap Q_i=\varnothing,
\]
and, for every \(i\neq j\), at least one of
\[
 P_i\cap Q_j,\qquad P_j\cap Q_i
\]
is nonempty.  Then \(k\leq C_{\rm T}\).
\end{corollary}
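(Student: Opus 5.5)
The plan is to reduce the symmetric ``weak-cross'' hypothesis to the one-sided ordered hypothesis of \cref{thm:frankl}, by passing to a large subfamily on which the cross condition becomes transitive. First define a tournament \(T\) on the index set \(\{1,\dots,k\}\). For \(i\neq j\), orient the edge \(i\to j\) if \(P_i\cap Q_j\neq\varnothing\), and orient it \(j\to i\) otherwise. In the second case the hypothesis forces \(P_j\cap Q_i\neq\varnothing\). If both intersections are nonempty, choose either orientation, for instance the one from the smaller index. Thus every arc \(i\to j\) of \(T\) satisfies \(P_i\cap Q_j\neq\varnothing\).

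Next I use the standard fact that every tournament on \(k\geq 2^m\) vertices contains a transitive subtournament on \(m+1\) vertices. I prove it by induction on \(m\). Pick a vertex \(x\) of out-degree at least \(\lceil (k-1)/2\rceil\geq 2^{m-1}\). Apply the induction hypothesis inside its out-neighbourhood to get a transitive subtournament on \(m\) vertices. Then put \(x\) in front of it.

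Now suppose \(k\geq 64=2^6\). Then \(T\) contains a transitive subtournament on \(7\) vertices. Relabel these vertices \(i_1,\dots,i_7\) in their transitive order, so that \(i_a\to i_b\) whenever \(a<b\). This gives \(P_{i_a}\cap Q_{i_b}\neq\varnothing\) for all \(a<b\). Each pair still satisfies \(|P|=|Q|=r\) and \(P\cap Q=\varnothing\), so these \(7\) pairs meet the hypotheses of \cref{thm:frankl}. Hence \(7\leq\binom{2r}{r}\leq\binom42=6\) for \(r\in\{1,2\}\), which is a contradiction. Therefore \(k\leq 63=C_{\rm T}\).

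There is no serious obstacle. The only point needing care is the orientation convention: arcs must be chosen so that \emph{every} arc certifies a nonempty intersection in the direction Frankl's theorem requires, with ties broken arbitrarily. The exponent must also be counted correctly. The tournament bound gives size \(1+\lfloor\log_2 k\rfloor\), and the requirement \(1+\lfloor\log_2 k\rfloor\leq 6\) is exactly \(k\leq 63\).
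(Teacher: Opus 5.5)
Your proof is correct and matches the paper's argument: orient each pair of indices by a nonempty cross-intersection, extract a transitive subtournament on $7$ vertices from any $64$, and contradict Frankl's bound $\binom{2r}{r}\leq 6$. The only difference is that you also prove the tournament fact by induction, which the paper quotes as standard.
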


\begin{proof}
Orient every pair of indices in a direction witnessing a nonempty
cross-intersection.  Every tournament on \(64=2^6\) vertices has a
transitive subtournament on \(7\) vertices.  Its transitive ordering
would satisfy \cref{thm:frankl} with \(k=7\), whereas
\(\binom{2r}{r}\leq\binom42=6\).  Hence \(k\leq63\).
\end{proof}

\section{One-sided star fibres and structured increments}\label{sec:increment}

The following proposition is the exact-output form of Bedert's
increment \cite[Proposition~6]{Bedert2024}, with ordinary dissociation
replaced by \(4\)-dissociation.  The replacement is valid because the
proof only compares subset sums having at most four terms on each
side.  The new point is to orient each alternative representation
towards an endpoint outside the covered set, and then to batch star
fibres whose translations have small mutual codegree.

\begin{proposition}[Batched one-sided structured increment]
\label{prop:increment}
Let \(p>2\) be prime, and put \(C=C_{\rm OS}=100000\).
Let \(A\subseteq G_p\) have \(|A|\geq2\) and no unique sum.  Let
\(H\subseteq A\) be \(4\)-dissociated, with
\[
 h:=|H|\geq10,
\]
and let \(S\subseteq G_p\) contain \(0\).  Put
\[
 n=|A|,\qquad K=\frac nh.
\]
If
\[
 2^{|S|}\leq p,
 \qquad
 Cn^3M(S)\leq h^4,
 \qquad M(S):=|2S-2S|,
\]
then there is a set \(D\subseteq G_p\) such that
\[
 0\in D,\qquad h|D|\leq2n,
\]
and, with \(S'=S+D\),
\begin{equation}\label{eq:batch-increment}
 h\leq144\,
 \bigl|((H+S')\cap A)\setminus((H+S)\cap A)\bigr|.
\end{equation}
In particular, \(|D|\leq2K\), and the coverage gain is at least
\(h/144\).
\end{proposition}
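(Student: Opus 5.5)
We follow the structure of Bedert's Proposition~6, with the alternative representations oriented outward. Write \(T:=(H+S)\cap A\) for the covered set.

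\textbf{Step 1: selector pairs.} Apply \cref{lem:selector} to the shift set \(S\); this is allowed because \(2^{|S|}\leq p\). For each \(u\in H\), let \(X_u:=\{s\in S:u+s\in A\}\). This set is nonempty because \(0\in S\) and \(H\subseteq A\). Put \(a_u:=u+s_{X_u}\in T\). For distinct \(u,w\in H\), consider the sum \(a_u+a_w\).

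\textbf{Step 2: the sum is not uniquely represented inside \(T\).} Suppose \(a_u+a_w=x+y\) with \(x=u'+s\in T\) and \(y=w'+s'\in T\). Then
\[
 u+w-u'-w'=s+s'-s_{X_u}-s_{X_w}\in 2S-2S .
\]
Bucket all such relations by the value of the right-hand side. If two different pairs \(\{u,w\}\) produce the same element of \(2S-2S\) with the same \(\{u',w'\}\), we get a relation among at most four elements of \(H\) on each side. Then \(4\)-dissociation forces \(\{u,w\}=\{u',w'\}\) or a degenerate coincidence. In that case the selector property of \cref{lem:selector} gives \(x=a_u\), \(y=a_w\).

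Counting, at most \(O(hM(S))\) pairs \(\{u,w\}\) admit a nontrivial alternative representation inside \(T\). Since \(Cn^3M(S)\leq h^4\) makes \(hM(S)\ll h^2\) with a large margin, most of the \(\binom h2\) pairs have \(a_u+a_w\) uniquely represented in \(T\). Because \(A\) has no unique sum, each such pair has an alternative representation \(a_u+a_w=x+y\) with \(y\notin T\) after orientation, and \(x\in A\).

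\textbf{Step 3: build \(D\).} Record for each good pair the translation
\[
 t:=y-u
\]
(or \(y-w\)), so that \(y\in H+S+t\).

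\textbf{Step 4: count coverage.} This is where the work lies. Choose a set \(D\) of translations, with \(0\in D\) and \(|D|\leq 2n/h\), whose new outputs \(y\notin T\) are numerous. Each good pair yields an outside element \(y\in A\setminus T\), and there are at most \(n\) such \(y\). A fixed \(y\) arises from few pairs with the same translation. Indeed, the codegree of two translations' output sets is bounded by \(O(M(S))\) via \(4\)-dissociation, since
\[
 y=x'+\ldots
\]
gives relations \(u_1+w_1-u_2-w_2\in 2S-2S\).

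Group the pairs by translation into star fibres: fixing \(t\) and \(u\), the partner \(w\) determines \(y\). Large fibres give \(\gg h\) new outputs from a single translation. Averaging over the \(\gg h^2\) good pairs and at most \(n^2\) possible pairs \((x,y)\), we take \(D\) to consist of the \(\lfloor 2n/h\rfloor\) most popular translations together with \(0\). The translations collectively carry \(\gg h^2\) pairs, and each outside \(y\) absorbs at most \(O(n)\) pairs per translation, with low codegree between translations. A greedy selection then shows the chosen batch covers at least \(h/144\) distinct new elements of \(A\setminus T\), all lying in \(H+S+D\).

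The main obstacle is this last deduplication. We must bound how often the same outside \(y\) is counted across different translations, using the codegree bound \(O(M(S))\) together with the hypothesis \(Cn^3M(S)\leq h^4\). The cube \(n^3\) is exactly what absorbs the loss when passing from \(h^2\) pairs to distinct outputs under the restriction \(|D|\leq 2K\). We would first try a Cauchy--Schwarz count of coinciding outputs and hope the constant \(144\) emerges from it.
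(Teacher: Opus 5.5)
There is a genuine gap at Steps~3--4, exactly where you place the obstacle. Your translations do not each cover many new points, so no popularity or Cauchy--Schwarz count can produce \(\gg h\) new points from \(O(K)\) translations. With \(t=y-u\), fixing \(t\) and \(u\) already fixes \(y=u+t\), and nothing forces many good pairs to share one difference \(y-u\). The variant \(y-w=(a_u-x)+s_{X_w}\) still carries the \(S\)-component of \(w\).

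The missing idea is the star-fibre structure.
\begin{enumerate}
\item Fibre the good pairs by the endpoint \(x\) of the alternative representation (the one not oriented outside), and keep fibres of mass at least \(m_0=h^2/(12n)\).
\item Show that heavy fibres whose maximum degree is below a third of their mass carry little mass. For disjoint pairs \(P,Q\) in one fibre, \(\sum_{z\in P}z-\sum_{z\in Q}z\in A-A+2S-2S\), a set of size at most \(n^2M\). The weak-cross corollary and \(4\)-dissociation give at most \(C_{\rm T}\) preimages per value. Hence these fibres satisfy \(\sum m_x^2\leq3C_{\rm T}n^2M\), and dividing by \(m_0\) is where \(Cn^3M\leq h^4\) is genuinely used.
\item For a star in the fibre of \(x\) with centre \(u\) and leaves \(w\), the identity \(x+y=a_u+a_w\) gives \(y=a_w+(a_u-x)\). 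Since \(a_w\in H+S\), the \emph{single} translation \(a_u-x\) covers all \(\geq h/(36K)\) leaf outputs.
\item Coalesce equal translations. Then prove that the output sets \(Y_t\) of distinct translations satisfy \(|Y_t\cap Y_{t'}|\leq3|S-S|\). The \(k=|Y_t\cap Y_{t'}|\) relations \(a_w-a_{w'}=t'-t\) contain a matching of size \(\geq k/3\), because odd order rules out an edge in both directions. Along this matching the values \(s_{X_w}-s_{X_{w'}}\in S-S\) are pairwise distinct by \(4\)-dissociation.
\item Finally apply Bonferroni to \(\lfloor K\rfloor\) translations, using \(M\leq h/(CK^3)\); this yields \(h/144\).
\end{enumerate}

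Two further repairs are needed. First, in Step~2, comparing two witnesses produces an identity between four-element \emph{multisets}, to which \(4\)-dissociation does not apply directly. You must first delete the set \(B_1\) of \(d\in H\) with \(d+v\in H\) for some nonzero \(v\in2S-2S\); there are at most \(C_{\rm T}M\) such elements. Then count witnesses with \(e=e'\) separately (at most \(h\) per value). Finally use the weak-cross corollary to extract two witnesses with empty cross-intersections, so that the unions are ordinary subsets of size at most four. Deleting \(B_1\) also makes \(u\mapsto a_u\) injective. You need this both for the internal unique representation to be off-diagonal and for distinct leaves to give distinct outputs.

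Second, a full batch of \(\lfloor2n/h\rfloor\) translations plus \(0\) has \(|D|=\lfloor2K\rfloor+1>2K\), which violates \(h|D|\leq2n\). Take \(\lfloor K\rfloor\) translations instead.
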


\begin{proof}
Use the fixed value \(C=C_{\rm OS}\) from
\eqref{eq:constant-ledger}.  It dominates every numerical threshold
below, including the rounding terms.  Write \(M=M(S)\).

\smallskip
\noindent\emph{Step 1: selectors and bad elements.}\par
For \(d_0\in H\), set
\[
 S_{d_0}:=\{u\in S:d_0+u\in A\}.
\]
It is nonempty because \(0\in S\) and \(H\subseteq A\).  Apply
\cref{lem:selector} simultaneously to all these subsets and write
\(s_{d_0}\in S_{d_0}\) for the selected element.
For later use, put
\[
 z_{d_0}:=d_0+s_{d_0}\in(H+S)\cap A.
\]

Call \(d_0\in H\) bad if
\[
 d_0+v\in H
 \quad\text{for some }v\in(2S-2S)\setminus\{0\},
\]
and let \(B_1\) be the set of bad elements.  Then
\begin{equation}\label{eq:B1}
 |B_1|\leq C_{\rm T}M.
\end{equation}
To see this, choose one witnessing \(v\) for each bad element.  If the
bound failed, one nonzero value \(v\in2S-2S\) would occur in more than
\(C_{\rm T}\) distinct relations
\[
 e_i=d_i+v,\qquad d_i,e_i\in H.
\]
Apply the contrapositive of \cref{cor:weak-cross} to
\[
 P_i=\{d_i\},\qquad Q_i=\{e_i\}.
\]
Since \(v\neq0\), \(P_i\cap Q_i=\varnothing\).  For some \(i\neq j\),
both cross-intersections vanish.  The identity
\[
 d_i+e_j=e_i+d_j
\]
then equates sums of two ordinary subsets of \(H\), each of cardinality
at most \(2\).  The \(4\)-dissociation of \(H\) makes those subsets
equal.  Since \(d_i\neq e_i\), this forces \(d_i=d_j\), a contradiction.
Put \(H_1=H\setminus B_1\).

\smallskip
\noindent\emph{Step 2: bad pairs and internal unique sums.}\par
Call an unordered pair
\(\{d_0,d_1\}\in\binom{H_1}{2}\) bad if there are
\(e,e'\in H\) and \(u,u'\in S\) such that
\begin{equation}\label{eq:bad-pair-witness}
 d_0+s_{d_0}+d_1+s_{d_1}=e+u+e'+u'
\end{equation}
and \(\{e,e'\}\neq\{d_0,d_1\}\) as multisets.  Let \(B_2\) be the set
of bad pairs.  We claim that
\begin{equation}\label{eq:B2}
 |B_2|\leq(C_{\rm T}+h)M.
\end{equation}

Choose one witness for each bad pair and orient each pair arbitrarily.
If \eqref{eq:B2} failed, more than \(C_{\rm T}+h\) distinct bad pairs
would have the same value
\[
 \omega:=u+u'-s_{d_0}-s_{d_1}\in2S-2S.
\]
For fixed \(\omega\), at most \(h\) of the selected witnesses can have
\(e=e'\).  Indeed, after fixing \(e\), the value
\(d_0+d_1=2e+\omega\) is fixed, and \(4\)-dissociation allows at most
one proper two-element subset of \(H\) with that sum.  Notice that
this argument compares two proper pairs; it never compares a pair
with the repeated multiset \(\{e,e\}\).

Retain \(C_{\rm T}+1\) witnesses with \(e_i\neq e_i'\), and set
\[
 P_i=\{d_i,d_i'\},\qquad Q_i=\{e_i,e_i'\}.
\]
We have \(P_i\cap Q_i=\varnothing\).  Otherwise a common term could be
cancelled in \eqref{eq:bad-pair-witness}; the two remaining elements
of \(H\) would then differ by a nonzero element of \(2S-2S\).  The
difference is nonzero because the two multisets in
\eqref{eq:bad-pair-witness} are different.  This would put a member of
\(P_i\subseteq H_1\) in \(B_1\).

By \cref{cor:weak-cross}, there are \(i\neq j\) such that
\[
 P_i\cap Q_j=P_j\cap Q_i=\varnothing.
\]
The common value of \(\omega\) gives
\[
 \sum_{x\in P_i}x-\sum_{x\in Q_i}x
 =
 \sum_{x\in P_j}x-\sum_{x\in Q_j}x,
\]
and therefore
\[
 \sum_{x\in P_i\cup Q_j}x
 =
 \sum_{x\in P_j\cup Q_i}x.
\]
The cross-disjointness makes both unions ordinary subsets of \(H\)
of cardinality at most \(4\).  Hence \(4\)-dissociation gives
\[
 P_i\cup Q_j=P_j\cup Q_i.
\]
As \(P_i\neq P_j\), an element of \(P_i\setminus P_j\) must belong to
\(Q_i\), contrary to \(P_i\cap Q_i=\varnothing\).  This proves
\eqref{eq:B2}.

The map
\[
 d_0\longmapsto d_0+s_{d_0}
\]
is injective on \(H_1\).  Equality at two distinct elements would make
their difference a nonzero member of
\(S-S\subseteq2S-2S\), contrary to the definition of \(H_1\).

Let
\[
 \cG:=\binom{H_1}{2}\setminus B_2.
\]
The hypothesis gives
\[
 M\leq\frac{h^4}{Cn^3}\leq\frac hC.
\]
Consequently, the fixed inequality \(C=C_{\rm OS}\geq1000C_{\rm T}\),
together with \eqref{eq:B1} and \eqref{eq:B2},
imply
\begin{equation}\label{eq:G-lower}
 |\cG|\geq\frac{h^2}{3}.
\end{equation}

For every \(\{d_0,d_1\}\in\cG\), the sum
\begin{equation}\label{eq:internal-sum}
 (d_0+s_{d_0})+(d_1+s_{d_1})
\end{equation}
has a unique unordered representation inside \((H+S)\cap A\).
Indeed, any representation there has the form
\[
 (e+u)+(e'+u').
\]
Goodness forces \(\{e,e'\}=\{d_0,d_1\}\).  After reordering,
membership of the summands in \(A\) gives
\(u\in S_{d_0}\) and \(u'\in S_{d_1}\).  The selector lemma forces
\[
 u=s_{d_0},\qquad u'=s_{d_1}.
\]
The two summands in \eqref{eq:internal-sum} are distinct by the
injectivity just proved, so this is an off-diagonal unique
representation in the smaller set.

\smallskip
\noindent\emph{Step 3: orienting the outside endpoint.}\par
Because \(A\) has no unique sum, \eqref{eq:internal-sum} has a
different unordered representation
\[
 a(P)+y(P),\qquad a(P),y(P)\in A,
\]
where \(P=\{d_0,d_1\}\).  At least one summand is outside \(H+S\), for
otherwise the internal uniqueness just proved would be contradicted.
Choose the representation and its orientation once and for all so
that
\begin{equation}\label{eq:y-outside}
 y(P)\notin (H+S)\cap A.
\end{equation}
If both endpoints are outside, either deterministic orientation may
be used.  If the alternative representation is diagonal, then
\(a(P)=y(P)\), and both lie outside.

For \(a\in A\), define
\[
 N(a):=\{P\in\cG:a(P)=a\}.
\]
Then
\begin{equation}\label{eq:fibre-total}
 \sum_{a\in A}|N(a)|
 =|\cG|\geq\frac{h^2}{3}.
\end{equation}
For fixed \(a\), the values \(y(P)\), \(P\in N(a)\), are distinct.
Indeed, equality of two such values makes the corresponding internal
sums equal.  Their unique representations in \((H+S)\cap A\),
together with the injectivity on \(H_1\), then make the index pairs
equal.  This also shows that a fixed fibre contains at most one
diagonal alternative.  Thus
\begin{equation}\label{eq:fibre-upper}
 |N(a)|\leq n.
\end{equation}

Write \(m_a=|N(a)|\), and put
\[
 m_0:=\frac{h^2}{12n},
 \qquad
 \cN:=\{a\in A:m_a\geq m_0\}.
\]
The fibres outside \(\cN\) carry total edge mass less than
\(nm_0=h^2/12\).  Thus \eqref{eq:fibre-total} gives
\begin{equation}\label{eq:N-lower}
 \sum_{a\in\cN}m_a\geq\frac{h^2}{4}.
\end{equation}

\smallskip
\noindent\emph{Step 4: a weighted star-fibre estimate.}\par
View each \(N(a)\) as a simple graph on the vertex set \(H\).
Call a fibre \(a\in\cN\) \emph{non-star} if its maximum degree is
less than \(m_a/3\).  Every edge of a non-star fibre is disjoint from
more than \(m_a/3\) other edges: the two endpoint degrees account for
fewer than \(2m_a/3\) incident edges.  Therefore, if \(\cT\) is the
set of ordered triples \((a,P,Q)\) with \(a\in\cN\) non-star,
\(P,Q\in N(a)\), and \(P\cap Q=\varnothing\), then
\begin{equation}\label{eq:T-lower}
 |\cT|>\frac13
 \sum_{\substack{a\in\cN\\a\ {\rm non\mbox{-}star}}}m_a^2.
\end{equation}

Map
\[
 (a,P,Q)\longmapsto
 \sigma(P,Q):=\sum_{x\in P}x-\sum_{x\in Q}x.
\]
Every value has at most \(C_{\rm T}\) preimages.  Otherwise
\cref{cor:weak-cross} gives two triples indexed by \(i\neq j\) with
\[
 P_i\cap Q_j=P_j\cap Q_i=\varnothing.
\]
Equality of their \(\sigma\)-values becomes
\[
 \sum_{x\in P_i\cup Q_j}x
 =
 \sum_{x\in P_j\cup Q_i}x.
\]
Both sides are sums over ordinary subsets of \(H\) having at most four
elements, so \(4\)-dissociation gives
\[
 P_i\cup Q_j=P_j\cup Q_i.
\]
Using \(P_i\cap Q_i=P_j\cap Q_j=\varnothing\), this equality gives
\(P_i\subseteq P_j\): indeed, an element of \(P_i\) belongs to
\(P_j\cup Q_i\), and it cannot belong to \(Q_i\).  Since both sets
have cardinality two, \(P_i=P_j\), and then \(Q_i=Q_j\).  Finally
\(a_i=a_j\), because the choice \(a(P)\) was fixed.  This contradicts
the distinctness of the triples.

On the other hand, subtracting the two identities
\[
 a+y(P)=\sum_{d\in P}(d+s_d),
 \qquad
 a+y(Q)=\sum_{d\in Q}(d+s_d)
\]
shows that
\[
 \sigma(P,Q)\in A-A+2S-2S.
\]
This target has cardinality at most \(n^2M\).  Hence
\eqref{eq:T-lower} gives
\[
 \sum_{\substack{a\in\cN\\a\ {\rm non\mbox{-}star}}}m_a^2
 \leq3C_{\rm T}n^2M.
\]
Since \(m_a\geq m_0\) on \(\cN\), the edge mass in its non-star
fibres is at most
\[
 \frac{3C_{\rm T}n^2M}{m_0}
 =\frac{36C_{\rm T}n^3M}{h^2}
 \leq\frac{36C_{\rm T}}C h^2
 \leq\frac{h^2}{16},
\]
by the fixed value \(C=C_{\rm OS}\).  It follows from
\eqref{eq:N-lower} that the
star fibres, denoted by \(\cN_\ast\), carry edge mass at least
\begin{equation}\label{eq:star-mass}
 \sum_{a\in\cN_\ast}m_a\geq\frac{3h^2}{16}.
\end{equation}

For each \(a\in\cN_\ast\), choose a centre \(c(a)\in H_1\) of degree
\(\nu(a)\geq m_a/3\), and let \(L(a)\subseteq H_1\) be the set of
leaves of the retained star.  Thus
\[
 \{c(a),d\}\in N(a)\qquad(d\in L(a)),
 \qquad |L(a)|=\nu(a).
\]
Writing \(y_{a,d}=y(\{c(a),d\})\), we have
\begin{equation}\label{eq:star-identities}
 a+y_{a,d}=z_{c(a)}+z_d
 \qquad(d\in L(a)).
\end{equation}
Moreover,
\begin{equation}\label{eq:retained-mass}
 \sum_{a\in\cN_\ast}\nu(a)\geq\frac{h^2}{16},
 \qquad
 \nu(a)\geq\frac{h^2}{36n}=\frac{h}{36K}.
\end{equation}

\smallskip
\noindent\emph{Step 5: coalescing equal translations.}\par
For \(a\in\cN_\ast\), put
\[
 t_a:=z_{c(a)}-a.
\]
Then \eqref{eq:star-identities} becomes
\begin{equation}\label{eq:star-output}
 y_{a,d}=z_d+t_a\in A\setminus((H+S)\cap A)
 \qquad(d\in L(a)).
\end{equation}
Every \(t_a\) is nonzero, since \(t_a=0\) would put the right-hand
side of \eqref{eq:star-output} back in \((H+S)\cap A\).

For each occurring translation \(t\), define
\[
 I_t:=\bigcup_{\substack{a\in\cN_\ast\\t_a=t}}L(a),
 \qquad
 Y_t:=\{z_d+t:d\in I_t\},
 \qquad
 E_t:=\sum_{\substack{a\in\cN_\ast\\t_a=t}}\nu(a).
\]
The injectivity of \(d\mapsto z_d\) on \(H_1\) gives
\[
 |Y_t|=|I_t|.
\]
For fixed \(t\), a centre \(c\) determines its fibre label uniquely,
because \(a=z_c-t\).  A fixed leaf can be joined to at most \(h-1\)
centres.  Consequently,
\[
 E_t\leq h|I_t|=h|Y_t|.
\]
Summing this inequality and using \eqref{eq:retained-mass} gives
\begin{equation}\label{eq:coalesced-mass}
 \sum_t|Y_t|\geq\frac h{16}.
\end{equation}
Every occurring \(t\) has at least one contributing fibre, so
\begin{equation}\label{eq:coalesced-minimum}
 |Y_t|\geq\frac{h}{36K}.
\end{equation}

\smallskip
\noindent\emph{Step 6: the sharp translate-codegree bound.}\par
We claim that, for distinct occurring translations \(t\neq t'\),
\begin{equation}\label{eq:sharp-codegree}
 |Y_t\cap Y_{t'}|\leq3|S-S|\leq3M.
\end{equation}
Put \(k=|Y_t\cap Y_{t'}|\) and
\(\delta=t'-t\neq0\).  Every common point gives a unique relation
\[
 z_d-z_e=\delta,\qquad d,e\in H_1,
\]
because \(d\mapsto z_d\) is injective.  The first coordinates in
these \(k\) relations are distinct, as are the second coordinates.
Their directed relation graph has indegree and outdegree at most one.
It has no loop, and an unordered edge cannot occur in both directions:
that would imply \(2\delta=0\), contrary to \(p\) being odd.
Its underlying graph therefore has maximum degree at most two and
contains a matching of \(r\geq k/3\) relations
\[
 z_{d_i}-z_{e_i}=\delta\qquad(1\leq i\leq r)
\]
whose \(2r\) endpoints are all distinct.

Set
\[
 \eta_i:=s_{d_i}-s_{e_i}\in S-S.
\]
The \(\eta_i\) are pairwise distinct.  Indeed, if
\(\eta_i=\eta_j\) for \(i\neq j\), then
\[
 d_i+e_j=e_i+d_j.
\]
Both sides are sums of ordinary two-element subsets of \(H\), and all
four elements are distinct.  The \(4\)-dissociation of \(H\) would
make those two subsets equal, a contradiction.  Thus
\[
 r\leq|S-S|\leq|2S-2S|=M,
\]
where the inclusion uses \(0\in S\).  Since \(k\leq3r\),
\eqref{eq:sharp-codegree} follows.

\smallskip
\noindent\emph{Step 7: batching.}\par
Let \(q\geq1\) be the number of distinct occurring translations and
put \(q_0=\lfloor K\rfloor\).  Since \(K\geq1\),
\[
 1\leq q_0\leq K,\qquad q_0\geq\frac K2.
\]
If \(q\geq q_0\), choose any \(q_0\) translations.  By the first
Bonferroni inequality, \eqref{eq:coalesced-minimum},
\eqref{eq:sharp-codegree}, and \(M\leq h/(CK^3)\),
\[
\begin{aligned}
 \left|\bigcup_{t\in T}Y_t\right|
 &\geq q_0\frac{h}{36K}-\binom{q_0}{2}3M\\
 &\geq\frac h{72}-\frac{3h}{2CK}
 \geq\frac h{144},
\end{aligned}
\]
again by \(C=C_{\rm OS}\).  If \(q<q_0\), use all the translations.
Then \eqref{eq:coalesced-mass} gives
\[
\begin{aligned}
 \left|\bigcup_tY_t\right|
 &\geq\sum_t|Y_t|-\sum_{\{t,t'\}}|Y_t\cap Y_{t'}|\\
 &\geq\frac h{16}-\binom q2 3M
 >\frac h{16}-\frac{3h}{2CK}
 \geq\frac h{32}.
\end{aligned}
\]

In either case, let \(T\) be the selected set of translations and put
\[
 D:=\{0\}\cup T.
\]
All translations in \(T\) are nonzero, and hence
\[
 0\in D,\qquad |D|\leq q_0+1\leq K+1\leq2K.
\]
By \eqref{eq:star-output},
\[
 \bigcup_{t\in T}Y_t
 \subseteq (H+S+D)\cap A,
\]
and this union is disjoint from \((H+S)\cap A\).
The preceding two cases prove \eqref{eq:batch-increment}.
Diagonal alternative representations cause no exception: they
contribute one ordinary outside point to the appropriate \(Y_t\), and
the fixed-fibre injectivity from Step~3 permits at most one such edge
in a fibre.
\end{proof}

\begin{proposition}[Entropy-sensitive two-sided increment]
\label{prop:entropy-increment}
There are absolute constants \(c>0\) and \(C_1\geq1\), fixed by
\eqref{eq:constant-ledger}, with the following property.  Assume the
hypotheses of \cref{prop:increment} with the stronger, fixed smallness
condition
\[
 C_{\rm E}n^3M(S)\leq h^4,
\]
write \(C=C_{\rm E}\), and put
\[
 \kappa=\lfloor K\rfloor+1,\qquad
 \lambda=\lfloor\log_2(2\kappa)\rfloor+1,\qquad
 \alpha=\frac1{100\lambda}.
\]
There is an update \(S'=S+D\) of one of the following three types.
\begin{enumerate}
\item[\({\rm W}\)] One has
      \[
       0\in D,\qquad |D|\leq C_1\frac{\kappa}{\alpha},
       \qquad
       |(H+S')\cap A|-|(H+S)\cap A|
       \geq c\frac h\alpha.
      \]
\item[\({\rm E}\)] For a nonempty set \(T\) of cardinality \(b\),
      \[
       D=\{0\}\cup T\cup(-T)
      \]
      and
      \[
       |(H+S')\cap A|-|(H+S)\cap A|
       \geq ch\log(b+1).
      \]
\item[\({\rm R}\)] There are an integer \(k\geq0\), a number
      \(\rho=2^{-k-1}\) with \(\alpha/2<\rho\leq1/2\), and a
      number \(\gamma\) such that
      \[
       c(k+1)\leq\gamma\leq\frac{C_1}{\rho},
      \]
      and a nonempty set \(T\), with
      \[
       |T|\leq C_1\kappa\gamma,\qquad D=\{0\}\cup T,
      \]
      one has
      \[
       |(H+S')\cap A|-|(H+S)\cap A|\geq c\gamma h.
      \]
      Moreover, for every \(t\in T\) there is a set
      \(\Gamma_t\subseteq H\), of cardinality at least
      \(\rho h/2\), and elements
      \(v_{c,t},w_{c,t}\in S\), \(c\in\Gamma_t\), such that
      \begin{equation}\label{eq:recurrent-witness}
       c+v_{c,t},\ c+w_{c,t}\in A,
       \qquad t=v_{c,t}-w_{c,t}.
      \end{equation}
\end{enumerate}
\end{proposition}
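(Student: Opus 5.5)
I plan to rerun Steps~1--6 of the proof of \cref{prop:increment} verbatim with \(C=C_{\rm E}\) in place of \(C_{\rm OS}\). This produces the nonzero translations \(t\), the coalesced output sets \(Y_t\subseteq A\setminus((H+S)\cap A)\) with \(|Y_t|\geq h/(36K)\), \(\sum_t|Y_t|\geq h/16\), and pairwise codegrees \(|Y_t\cap Y_{t'}|\leq 3M\). I then classify translations by their \emph{reverse outputs}. For each \(t\) and each leaf \(d\in I_t\), the star identity \(z_d+t=y\) can be read backwards: writing \(z_d=d+s_d\) and \(y=e+u\) whenever \(y\) later becomes covered, we get \(t\) as a difference of two elements of \(S\) attached to the same base point. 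This is the source of the witness sets \(\Gamma_t\) in type R. Concretely, I consider the translated set \(Y_t-2t=\{z_d-t\}\), together with its membership in \(A\), and the fibres \(\Gamma_t:=\{c\in H: c+v,\,c+w\in A,\ v,w\in S,\ v-w=t\}\). I then sort translations dyadically by \(\rho_t\approx|\Gamma_t|/h\in(2^{-k-2},2^{-k-1}]\), with \(k\) ranging over at most \(\lambda\) levels down to \(\alpha/2\).

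The three types then correspond to three disjoint regimes. \emph{Wide:} if translations with \(|Y_t|\geq h/\alpha\cdot(\text{const}/\kappa)\) carry a constant share of the mass \(\sum|Y_t|\), I take \(D=\{0\}\cup T\) with \(|T|\leq C_1\kappa/\alpha\) such translations. The union bound with codegree \(3M\leq 3h/(C_{\rm E}K^3)\) then gives coverage \(\gg h/\alpha\), exactly as in Step~7 with the larger per-batch size. \emph{Exposed:} among the remaining ones, I look at those \(t\) whose reverse outputs \(z_d-t\) (using the symmetric update \(T\cup(-T)\)) are mostly outside \((H+S)\cap A\). Taking \(b\) of them and applying Bonferroni with the codegree bound to both \(Y_t\) and \(Y_{-t}\)-type sets, the union has size \(\gg b\cdot h/(36K)\). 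I then argue that this is \(\gg h\log(b+1)\) after choosing \(b\) optimally. If the natural bound only gives \(\min(b/K,1)h\), I would instead pick \(b\) from a dyadic range of sizes and use that the total mass \(h/16\) spread over \(\lambda\) dyadic scales forces one scale with gain proportional to \(\log(b+1)\). \emph{Recurrent:} every other translation has most reverse outputs already covered, which produces \(|\Gamma_t|\) large at some dyadic level \(\rho=2^{-k-1}>\alpha/2\). Pigeonholing over the \(\lambda\) levels chooses \(k\). I set \(\gamma\) to be the normalized coverage gain of that level batch, and take \(|T|\leq C_1\kappa\gamma\) translations via the Step~7 selection. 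The lower bound \(\gamma\geq c(k+1)\) comes from the fact that at level \(k\) each covered fibre of size \(\rho h/2\) can host only \(O(1/\rho)\) translations by the weak-cross/dissociation argument of Step~6. The upper bound \(\gamma\leq C_1/\rho\) comes from the same counting.

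The main obstacle I expect is the recurrent regime. I must show that when reverse outputs are mostly old, the witnessing fibres \(\Gamma_t\) really have size \(\rho h/2\) at a level where the coverage gain is at least \(c(k+1)h\) with \(|T|\leq C_1\kappa\gamma\). This requires a load-balancing count: summing \(\sum_t|\Gamma_t|\leq h\cdot\max_c\#\{t\}\), and bounding the number of translations sharing a base \(c\) by \(|S-S|\) but, more usefully, by the fibre multiplicity. I would first try to get the \(k+1\) factor by comparing levels: if level \(k\) carries mass \(\geq 2^{-k}\cdot\) (total)/\(\lambda\) fails for all \(k\), the exposed case holds. The smallness \(C_{\rm E}n^3M\leq h^4\) is used throughout to absorb all codegree error terms \(\binom{|T|}{2}3M\) when \(|T|\) is as large as \(C_1\kappa/\alpha\ll K\log K\).
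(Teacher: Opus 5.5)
There is a genuine gap. You never set up the second side of the star rectangles, and all three conclusions depend on it. The paper records more than the leaves \(I_t\), with \(u_t=|Y_t|\). For each translation it also records the set \(C_t\) of retained star centres with that translation, \(r_t=|C_t|\), and the edge mass \(e_t=E_t\). The reverse points are \(P_t=\{z_c-t:c\in C_t\}\), which are the fibre labels \(a\) themselves. They lie in \(A\) and in \(H+S-t\), are pairwise disjoint over \(t\), and satisfy the rectangle bound \(e_t\leq r_tu_t\) with \(\sum_te_t\geq h^2/16\).

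Your leaf-based points \(z_d-t\) have no reason to lie in \(A\), so covering them by \(-T\) gains nothing. Your source of witnesses also fails. By the orientation \eqref{eq:y-outside}, a forward output \(y=z_d+t\) is not in \(H+S\), so \(y=e+u\) with \(u\in S\) never occurs. Coverage at a later stage does not help, because \eqref{eq:recurrent-witness} requires \(v_{c,t},w_{c,t}\) in the current \(S\). The witnesses come from the reverse side: if a label \(z_c-t\) equals \(c+u\) with \(u\in S\), then \(s_c-t\in S_c\). You would also need the estimate \(|P_t\cap((H+S)\cap A)|\leq\ell_t+4|S-S|\). It is proved by injectivity of \(c\mapsto e\), a matching on the non-loop edges, and \(4\)-dissociation, and it turns ``reverse outputs mostly covered'' into many such loops. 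Covered reverse points with \(e\neq c\) give no witness at \(c\).

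The quantitative mechanism is also missing. The only mass you use, \(\sum_t|Y_t|\geq h/16\), is linear in \(h\). Together with \(|Y_t|\geq h/(36K)\) and the codegree bound, it is consistent with the whole forward union having size \(O(h)\). So no selection of forward outputs can certify the \(h/\alpha\) gain of type W or the \(h\log(b+1)\) gain of type E. Pigeonholing a total of order \(h\) over dyadic scales cannot create a factor \(\log(b+1)\). For large \(K\), a single translation with \(|Y_t|=h/16\) meets your W hypothesis, yet your argument certifies only \(h/16\ll h/\alpha\).

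The paper instead splits the translations by \(r_t\):
\begin{itemize}
\item In W (\(r_t<\alpha h\)), the rectangle bound gives \(\sum u_t\geq h/(64\alpha)\).
\item In E, it uses \(u_t+r_t\geq2\sqrt{e_t}\) and the prefix estimate \(\sum_{i\leq b}\sqrt{e_i}\gg h\log(b+1)\). This works because \(\sum e_t\gg h^2\) is quadratic and \(\sum_i\log^2(i+1)/i^2<\infty\). The exposed reverse sets are disjoint of size \(\geq r_t/3\), which also gives \(b\ll K/\alpha\) for the Bonferroni error.
\item In R, it sorts by \(r_t\in[\rho_kh,2\rho_kh)\) and uses \(\sum_k(k+1)\rho_k\leq2\) to find \(k\) with \(\eta_k\gg(k+1)\rho_k\). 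It sets \(\gamma=\eta_k/\rho_k\), which is \(\ll1/\rho\) by the total edge count, and obtains \(\sum_{t\in T}u_t\geq\gamma h/2\) from \(e_t\leq r_tu_t\).
\end{itemize}
Your proposed route to \(c(k+1)\leq\gamma\leq C_1/\rho\), via each fibre hosting \(O(1/\rho)\) translations, is not established and would not give these bounds. Defining \(\gamma\) as the realized gain is also circular.
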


\begin{proof}
Run Steps~1--6 of the proof of \cref{prop:increment}, retaining all
the star fibres rather than selecting the final batch.  For an
occurring translation \(t\), let
\[
 u_t=|Y_t|,\qquad e_t=E_t,
\]
and let \(C_t\) be the set of retained star centres having translation
\(t\).  Put
\[
 r_t=|C_t|,\qquad
 P_t=\{z_c-t:c\in C_t\}.
\]
A centre and a translation determine the fibre label \(a=z_c-t\),
and all leaves belonging to that centre lie in \(I_t\).  Consequently
\begin{equation}\label{eq:rectangle-product}
 e_t\leq r_tu_t.
\end{equation}
The estimates already proved in \eqref{eq:retained-mass},
\eqref{eq:coalesced-minimum}, and \eqref{eq:sharp-codegree} say
\begin{equation}\label{eq:two-sided-data}
 \sum_t e_t\geq\frac{h^2}{16},\qquad
 u_t\geq\frac{h}{36K},\qquad
 |Y_t\cap Y_{t'}|\leq3M\quad(t\neq t').
\end{equation}

We record the additional information supplied by the reverse points.
The sets \(P_t\) are pairwise disjoint.  Indeed, a common point would
be a fibre label with two chosen centres, whereas a centre was chosen
once for each retained fibre.

Let
\[
 \ell_t=
 \bigl|\{c\in C_t:s_c-t\in S_c\}\bigr|.
\]
We claim that
\begin{equation}\label{eq:reverse-covered}
 |P_t\cap((H+S)\cap A)|\leq \ell_t+4|S-S|.
\end{equation}
For every covered point \(z_c-t\in P_t\), choose
\[
 z_c-t=e+u,\qquad e\in H,\quad u\in S.
\]
The resulting map \(c\mapsto e\) is injective.  Otherwise two
different centres in \(H_1\) would differ by a nonzero element of
\(2S-2S\), contrary to the definition of \(H_1\).
Discard the loops \(e=c\).  The remaining directed graph has
indegree and outdegree at most one.  Greedily choosing an edge and
deleting all edges incident with either endpoint removes at most four
directed edges, so the graph contains a matching of at least one
quarter of its edges.  Along an edge \(c\to e\), put
\(\eta=s_c-u\in S-S\).  Two matching edges with the same \(\eta\)
would give
\[
 c_i+e_j=e_i+c_j.
\]
Their four endpoints are distinct, so \(4\)-dissociation is a
contradiction.  Thus the matching has at most \(|S-S|\) edges.  There
are at most \(4|S-S|\) nonloops, while a loop has
\(u=s_c-t\in S_c\).  This proves \eqref{eq:reverse-covered}.

Partition the translations into
\[
\begin{aligned}
 \mathcal U_{\rm W}
  &=\{t:r_t<\alpha h\},\\
 \mathcal U_{\rm E}
  &=\{t:r_t\geq\alpha h,\ \ell_t<r_t/2\},\\
 \mathcal U_{\rm R}
  &=\{t:r_t\geq\alpha h,\ \ell_t\geq r_t/2\}.
\end{aligned}
\]
Put \(E(\mathcal V)=\sum_{t\in\mathcal V}e_t\).  If either
\(E(\mathcal U_{\rm W})\) or \(E(\mathcal U_{\rm E})\) is at least
\(h^2/64\), we use that class.  Otherwise
\begin{equation}\label{eq:recurrent-total-mass}
 E(\mathcal U_{\rm R})\geq\frac{h^2}{32}.
\end{equation}

Suppose first that \(E(\mathcal U_{\rm W})\geq h^2/64\).  From
\eqref{eq:rectangle-product},
\[
 \sum_{t\in\mathcal U_{\rm W}}u_t\geq\frac{h}{64\alpha}.
\]
Take all these translations if there are at most
\(B=\lfloor c_1\kappa/\alpha\rfloor\) of them, and otherwise take any
\(B\), where \(c_1>0\) is a sufficiently small absolute constant.
The first Bonferroni inequality, \eqref{eq:two-sided-data}, and
\(M\leq h/(CK^3)\) give
\[
 \left|\bigcup_{t\in T}Y_t\right|
 \geq c_2\frac h\alpha-3\binom B2M
 \geq c_3\frac h\alpha.
\]
The exact division-free version uses the displayed integer parameters;
the constants \(C_{\rm out}=C_{\rm gain}=2^{20}\) in
\eqref{eq:constant-ledger} cover every rounding case.  Taking
\(D=\{0\}\cup T\) gives type \({\rm W}\).

Suppose next that \(E(\mathcal U_{\rm E})\geq h^2/64\).  Order the
numbers \(e_t\), \(t\in\mathcal U_{\rm E}\), decreasingly.  The
following prefix estimate
\begin{equation}\label{eq:square-root-prefix}
 \sum_{i\leq b}\sqrt{e_i}\gg h\log(b+1)
\end{equation}
holds for some nonempty prefix.  To verify this, suppose that every
prefix were smaller than \(c_0h\log(b+1)\).  Since the sequence is
nonincreasing,
\[
 i\sqrt{e_i}\leq\sum_{j\leq i}\sqrt{e_j}
 <c_0h\log(i+1).
\]
It would follow that
\[
 \sum_i e_i
 \leq c_0^2h^2\sum_{i\geq1}\frac{\log^2(i+1)}{i^2},
\]
contradicting \(\sum e_t\geq h^2/64\) once \(c_0\) is chosen
sufficiently small.  Let \(T\) be this
prefix and write \(b=|T|\).  By \eqref{eq:reverse-covered} and the
fixed smallness constant \(C_{\rm E}\),
\[
 |P_t\setminus((H+S)\cap A)|\geq\frac{r_t}{3}
 \qquad(t\in T).
\]
These reverse sets are disjoint subsets of \(A\), and each has
cardinality at least \(\alpha h/3\).  Hence
\(b\alpha h/3\leq n=Kh\), so \(b\ll K/\alpha\).  The positive outputs
have union of size at least
\(\sum_{t\in T}u_t-3\binom b2M\), while the negative translations
cover the displayed reverse sets.  Since
\(u_t+r_t\geq2\sqrt{e_t}\), the larger of the two gains is at least
\[
 \frac16\sum_{t\in T}(u_t+r_t)-C b^2M
 \geq c\sum_{t\in T}\sqrt{e_t}-C b^2M.
\]
Now \(b\ll K/\alpha\) and \(M\leq h/(CK^3)\) give
\[
 b^2M\ll \frac{h}{C K\alpha^2}.
\]
Since \(K/\log^2(2K)\) is bounded away from zero for \(K\geq1\),
the fixed admissibility constant \(C_{\rm E}\) makes this error smaller
than the lower bound in
\eqref{eq:square-root-prefix}.  Thus the gain is
\(\gg h\log(b+1)\).  Thus \(D=\{0\}\cup T\cup(-T)\) has type \({\rm E}\).

It remains to consider \eqref{eq:recurrent-total-mass}.  Put
\[
 J=\left\lceil\log_2\frac1\alpha\right\rceil-1,
 \qquad \rho_k=2^{-k-1}\quad(0\leq k\leq J),
\]
and partition \(\mathcal U_{\rm R}\) into the classes
\[
 \mathcal U_k=\{t\in\mathcal U_{\rm R}:
       \rho_kh\leq r_t<2\rho_kh\},
\]
with the endpoint \(r_t=h\) assigned to the first class.  Put
\(\eta_k=E(\mathcal U_k)/h^2\).  Since
\(\sum_{k\geq0}(k+1)\rho_k\leq2\),
\eqref{eq:recurrent-total-mass} gives an index \(k\) for which
\[
 \gamma:=\frac{\eta_k}{\rho_k}\gg k+1.
\]
Also \(\gamma\ll1/\rho_k\), since the retained stars contain at most
\(\binom h2\) edges in total.  Write \(\rho=\rho_k\) and
\(\mathcal U=\mathcal U_k\).  Take all translations in \(\mathcal U\)
if there are at most
\[
 B=\max\{1,\lfloor c_4\kappa\gamma\rfloor\}
\]
of them, and otherwise take any \(B\) of them.  If all are taken,
\eqref{eq:rectangle-product} gives
\[
 \sum_{t\in T}u_t\geq\frac{\eta_kh}{2\rho}=\frac{\gamma h}{2};
\]
if only \(B\) are taken, \eqref{eq:two-sided-data} gives the same
lower bound up to an absolute constant.  A final Bonferroni estimate
has error
\[
 O(B^2M)\ll\frac{\gamma^2h}{K}
 \ll\frac{\gamma h}{K\rho}\leq c\gamma h,
\]
 by the fixed constant \(C_{\rm E}\); here
\(\rho>\alpha/2\) and \(\lambda/K\) is absolutely bounded for
\(K\geq1\).  Thus the positive
translations gain at least \(c\gamma h\).  For every selected \(t\),
take
\[
 \Gamma_t=\{c\in C_t:s_c-t\in S_c\}.
\]
Then \(|\Gamma_t|=\ell_t\geq r_t/2\geq\rho h/2\), and
\eqref{eq:recurrent-witness} holds with
\[
 v_{c,t}=s_c,\qquad w_{c,t}=s_c-t.
\]
Thus \(D=\{0\}\cup T\) has type \({\rm R}\), which completes the
proof.
\end{proof}

\section{Global coefficient coding and iteration}\label{sec:coding}

\begin{lemma}[Load-sensitive fibre coding]
\label{lem:load-sensitive-coding}
Let \(\mathcal C\) be a set of at most \(h\) indices, and let
\((R_c)_{c\in\mathcal C}\) be finite subsets of an abelian group,
each containing \(0\) and having cardinality at most \(B\).  Let
\(D_1,\dots,D_q\) be finite sets containing \(0\).  Suppose that every
nonzero \(t\in\bigcup_iD_i\) belongs to \(R_c-R_c\) for at least
\(\beta h\) indices \(c\), where \(0<\beta\leq1\).  Then
\begin{equation}\label{eq:load-sensitive-coding}
 \log\left|\sum_{i=1}^qD_i\right|
 \ll q\left\{
 \log\left(e\left(1+\frac Bq\right)\right)
 +\log\frac e\beta\right\}.
\end{equation}
\end{lemma}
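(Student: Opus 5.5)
The plan is a double count: encode each element of \(\sum_iD_i\) by a multiset of short ``coordinates'', and compare the number of codes with the number of codes every sum element must receive. Fix \(x=t_1+\dots+t_q\in\sum_iD_i\) with \(t_i\in D_i\). For every \(i\) with \(t_i\neq0\), the hypothesis gives at least \(\beta h\) indices \(c\) with \(t_i=r-r'\) for some \(r,r'\in R_c\). When \(t_i=0\), every index works, since \(0\in R_c\). So each \(x\) comes with at least \((\beta h)^q\) ordered index tuples \((c_1,\dots,c_q)\). For each such tuple, \(x\) is determined by the data
\[
 \Bigl(\textstyle\sum_{i:c_i=c}r_i,\ \sum_{i:c_i=c}r'_i\Bigr)_{c\in\mathcal C},
\]
that is, by a load vector \((m_c)\) with \(\sum_c m_c=q\) together with, for each \(c\), two multisets of size \(m_c\) drawn from \(R_c\). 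The order of the \(i\) is forgotten at this point, which is the whole purpose of the coding.

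The index part of the count is harmless. The load vectors number at most \(\binom{h+q-1}{q}\). An ordered index tuple determines its load vector, and a load vector comes from at most \(q!\) ordered tuples. Hence each \(x\) is realised by at least \((\beta h)^q/q!\) distinct load vectors. When \(q\leq h\), dividing gives a ratio of at most \(((h+q)/(\beta h))^q\leq(2/\beta)^q\), which is the \(q\log(e/\beta)\) term. When \(q>h\), I would first merge the sets \(D_i\) into at most \(h\) groups, or else absorb the loss into the \(\log(e(1+B/q))\) term.

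The remaining task is to bound the multiset data attached to a fixed load vector:
\[
 \prod_c\binom{B+m_c-1}{m_c}^2
 \leq\exp\Bigl(2\sum_c m_c\log\bigl(e(1+B/m_c)\bigr)\Bigr).
\]
This is the load-sensitive step, and I expect it to be the main obstacle. Concavity of \(m\mapsto m\log(1+B/m)\) bounds the exponent by \(2q\log(e(1+Bk/q))\), where \(k\) is the number of indices carrying positive load. That is good only when \(k\) is bounded, whereas a spread-out load vector with every \(m_c=1\) costs \(B^{2q}\). So the plan cannot be to count all load vectors uniformly. Instead, each \(x\) must be assigned to a concentrated load vector, one with \(k\ll q\log(e/\beta)/\log(e(1+B/q))\) or, more precisely, one with \(\sum_cm_c\log(1+B/m_c)\ll q\log(e(1+B/q))\).

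I would try to find concentrated load vectors as follows, and this is where most of the work lies. Among the at least \((\beta h)^q\) index tuples available to \(x\), choose one at random, drawing each \(c_i\) independently and uniformly from the admissible witnesses of \(t_i\). The expected value of \(\sum_c m_c\log(1+B/m_c)\) should then be estimated against the weighted count \(\sum_{(m_c)}\Pr[(m_c)]\cdot\prod_c\binom{B+m_c-1}{m_c}^2\). Concretely, I would replace the crude ``\(\geq(\beta h)^q/q!\) preimages'' bound by a weighted double count, giving each code of \(x\) weight equal to its probability under this random choice, so that the total weight of the codes of \(x\) is exactly \(1\). Then \(|\sum D_i|\) is at most the sum of the maximal weights over all codes. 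That sum should factor over indices \(c\) through a generating function
\[
 \Bigl(\textstyle\sum_m\binom{B+m-1}{m}^2(\beta h)^{-m}z^m\Bigr)^{|\mathcal C|},
\]
to be evaluated at the saddle point \(z\approx q/h\). This evaluation should produce exactly \(\exp(O(q\log(e(1+B/q))+q\log(e/\beta)))\). If the saddle-point computation does not close, the fallback is the weaker bound \(q\log(eB/\beta)\), obtained from the unweighted multiset count, and I would then check whether that bound already suffices for the application in \cref{sec:coding}.
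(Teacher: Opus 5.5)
\textbf{There is a genuine gap.} It sits at the step you yourself call the main obstacle, and the mechanism you propose there cannot work.

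\emph{Why the random witness choice fails.} Drawing each \(c_i\) independently and uniformly from the witnesses of \(t_i\) spreads the load instead of concentrating it: for \(q\leq\sqrt h\) the \(c_i\) are typically distinct. Your weights only see the per-coordinate witness count, so the best available bound on the weight of a load vector is \(\Pr[(m_c)]\leq\frac{q!}{\prod_c m_c!}(\beta h)^{-q}\). A power-series coefficient dominates each of its terms, so no saddle-point evaluation can beat the spread-out contribution \(q!\binom{|\mathcal C|}{q}(\beta h)^{-q}B^{2q}\). For \(|\mathcal C|=h\) and \(q\leq\sqrt h\) this is of order \((B^2/\beta)^q\), which is just the trivial bound \(\prod_i|D_i|\).

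\emph{A concrete counterexample to the method.} Take \(R_c=R\) for all \(c\), with \(|\mathcal C|=h\), \(|R|=B=q\leq\sqrt h\), \(\beta=1\) and \(D_i=R-R\). Then \(\sum_iD_i\subseteq qR-qR\) has at most \(\binom{2q-1}{q}^2\leq16^q\) elements, and the lemma's right-hand side is \(O(q)\). Your count, however, is \(\exp(\Omega(q\log q))\).

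\emph{The fallback does not rescue the argument.} The bound \(q\log(eB/\beta)\) is not the stated lemma. It also does not suffice in \cref{sec:coding}: with \(B\asymp K/\rho_k\) and \(\beta\asymp\rho_k\) it costs an extra \(q_k\log K\) at level \(k\). In the worst case this gives only \(\exp(O(K\log K))\) control of \(S_j\), hence only \(K\gg\log\log p/\log\log\log p\).

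\emph{The case \(q>h\) is not handled.} Merging the \(D_i\) destroys the hypothesis that each summand is a single translation lying in some \(R_c-R_c\). The index loss \(q\log((h+q)/(\beta h))\) grows with \(q/h\), so it cannot be absorbed into \(\log(e(1+B/q))\leq1+B/q\).

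\textbf{The missing idea.} Your diagnosis that loads must be concentrated is right. What is missing is to concentrate \emph{globally and deterministically}, weighting translations by their actual usage and paying for the concentration with an entropy term. The paper's argument runs as follows.
\begin{enumerate}
\item[(i)] Take \(X\) uniform on \(Z=\sum_iD_i\), fix a representation \(X=\sum_it_i(X)\), and set \(w(t)=\sum_i\Pr(t_i(X)=t)\) for \(t\neq0\).
\item[(ii)] Averaging over the at most \(h\) fibres yields \(c_1\) such that the translations in \(R_{c_1}-R_{c_1}\) carry at least a \(\beta\)-fraction of the weight. They form a group \(G_1\), and the step is repeated on the remaining translations.
\item[(iii)] Every translation in \(G_j\) lies in the single set \(R_{c_j}-R_{c_j}\). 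Hence the group sum \(X_j\) lies in \(Q_jR_{c_j}-Q_jR_{c_j}\), where \(Q_j\) counts the summands falling in \(G_j\). The multiset generating function then gives \(\mathsf H(X_j)\ll\mu_j\log(e(1+B/\mu_j))\), with \(\mu_j=\mathbb E Q_j\).
\item[(iv)] The greedy rule gives the normalized masses \(p_j=\mu_j/W\) geometric tails, so their entropy is \(O(\log(e/\beta))\). The inequality \(1+B/(Wp_j)\leq(1+B/W)/p_j\) then bounds \(\sum_j\mu_j\log(e(1+B/\mu_j))\) by \(O\bigl(W\log(e(1+B/W))+W\log(e/\beta)\bigr)\), where \(W\leq q\).
\item[(v)] Subadditivity, \(\log|Z|=\mathsf H(X)\leq\sum_j\mathsf H(X_j)\), proves the lemma for all \(q\), with no case split at \(q=h\).
\end{enumerate}
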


The kernel-checked, division-free strengthening is the following.  If
\(\ell\geq1\) is an integer and every nonzero translation is supported
by at least \(\ell\) fibres, then
\begin{equation}\label{eq:load-sensitive-coding-exact}
 \log\left|\sum_{i=1}^qD_i\right|
 \leq8q\left\{
  1+\log\left(1+\frac Bq\right)
  +\log\left(\left\lfloor\frac h\ell\right\rfloor+1\right)
 \right\}.
\end{equation}
Taking \(\ell=\lceil\beta h\rceil\) gives
\eqref{eq:load-sensitive-coding}; the soft-constant form is retained in
the main argument for readability.

\begin{proof}
Put \(Z=\sum_iD_i\), and choose a uniform random element \(X\) of
\(Z\).  Write \(\mathsf H\) for Shannon entropy with natural logarithms.
For every \(x\in Z\), fix one representation
\[
 x=\sum_{i=1}^qt_i(x),\qquad t_i(x)\in D_i.
\]
For a nonzero translation \(t\), let
\[
 w(t)=\sum_{i=1}^q\mathbb P(t_i(X)=t).
\]
Starting from the translations of positive weight, greedily choose an
index \(c_j\) for which at least a \(\beta\)-proportion of the remaining
weight is represented in \(R_{c_j}-R_{c_j}\), and assign all such
remaining translations to a group \(G_j\).  Such an index exists by
averaging.  Write \(\mu_j=\sum_{t\in G_j}w(t)\), let \(W=\sum_j\mu_j\),
and put \(p_j=\mu_j/W\) when \(W>0\).  The greedy choice gives
\[
 \mu_j\geq\beta\left(W-\sum_{\ell<j}\mu_\ell\right).
\]
Thus the distribution \((p_j)\) has geometric tails, and the standard
maximum-entropy estimate for a positive integer-valued random variable
of mean at most \(\beta^{-1}\) gives
\begin{equation}\label{eq:geometric-entropy}
 -\sum_jp_j\log p_j\ll\log\frac e\beta.
\end{equation}

For a realization of \(X\), let \(Q_j\) be the number of indices
\(i\) for which \(t_i(X)\in G_j\), and put
\[
 X_j=\sum_{\{i:t_i(X)\in G_j\}}t_i(X).
\]
Then \(X=\sum_jX_j\), whereas conditionally on \(Q_j=m\), the random
variable \(X_j\) lies in \(mR_{c_j}-mR_{c_j}\).  Hence
\[
 \mathsf H(X_j)
 \leq \mathsf H(Q_j)
 +\mathbb E\left[2\log\binom{\lceil B\rceil+Q_j-1}{Q_j}\right].
\]
For \(0<x<1\), the generating function for multisets gives
\[
 \binom{\lceil B\rceil+m-1}{m}x^m\leq(1-x)^{-\lceil B\rceil}.
\]
Taking \(x=\mu_j/(\lceil B\rceil+\mu_j)\), and using the maximum
entropy formula for a nonnegative integer-valued random variable with
prescribed mean, gives
\[
 \mathsf H(X_j)
 \ll\mu_j\log\left(e\left(1+\frac B{\mu_j}\right)\right).
\]
Indeed, the second term obtained from the generating function is at
most \(\mu_j\), and the maximum entropy is
\((\mu_j+1)\log(\mu_j+1)-\mu_j\log\mu_j\), which obeys the same bound.
Summing this estimate and using \eqref{eq:geometric-entropy} gives
\begin{align*}
 \sum_j\mathsf H(X_j)
 &\ll W\left\{
 \log\left(e\left(1+\frac BW\right)\right)
 +\log\frac e\beta\right\}+1\\
 &\ll q\left\{
 \log\left(e\left(1+\frac Bq\right)\right)
 +\log\frac e\beta\right\}.
\end{align*}
Here the last step uses \(W\leq q\) and the monotonicity of
\(x\log(e(1+B/x))\).  Finally
\(\mathsf H(X)=\log|Z|\leq\sum_j\mathsf H(X_j)\), proving
\eqref{eq:load-sensitive-coding}.  The case \(W=0\) is immediate.
\end{proof}

\begin{proposition}[Four-dimension ratio]\label{prop:dimension-ratio}
For every prime \(p\) and every \(A\subseteq G_p\) with \(|A|\geq2\)
and no unique sum, put \(h=\dimfour(A)\).  Then
\[
 \log\log p\leq
 C_{\rm DR}\left(\left\lfloor\frac{|A|}{h}\right\rfloor+1\right).
\]
Consequently,
\[
 \frac{|A|}{h}\geq\frac{1}{2C_{\rm DR}}\log\log p.
\]
\end{proposition}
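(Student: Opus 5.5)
We iterate the increments of \cref{prop:increment,prop:entropy-increment}, starting from \(S_0=\{0\}\), and stop at the first state that violates a hypothesis. The coding lemma \cref{lem:load-sensitive-coding} bounds the additive complexity of the shift set at the stopping time; comparing that bound with the violated hypothesis forces \(K\) to be large.

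Fix a maximum \(4\)-dissociated \(H\subseteq A\), and put \(h=\dimfour(A)\) and \(\kappa=\lfloor K\rfloor+1\), where \(K=|A|/h\). Since the target inequality is \(\log\log p\leq C_{\rm DR}\kappa\), we may assume \(\log\log p>C_{\rm DR}\kappa\). In particular \(p\) is huge, and by \cref{prop:four-dimension-compression} we have \(h\geq\log p/\log 28\geq10\).

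We build \(S_{j+1}=S_j+D_j\) by applying \cref{prop:entropy-increment} while both \(C_{\rm E}n^3M(S_j)\leq h^4\) and \(2^{|S_j|}\leq p\) hold. Each step raises the covered count \(|(H+S_j)\cap A|\), which never exceeds \(n=Kh\). We allocate this total gain among the three update types.
\begin{itemize}
\item A type W step gains \(\gg h/\alpha\) using \(|D|\ll\kappa/\alpha\), so there are \(O(K\alpha)\) such steps.
\item A type E step gains \(\gg h\log(b+1)\) using \(|D|=2b+1\), so \(\sum\log(b+1)\ll K\) over these steps.
\item Type R steps are grouped by their dyadic level \(k\leq J\ll\log\lambda\). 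At level \(k\), each step gains \(\gg\gamma h\) with \(|T|\ll\kappa\gamma\). Hence the total \(\sum|T|\) over such steps is \(\ll\kappa\cdot K\), and the number \(q_k\) of steps is \(\ll K/(k+1)\).
\end{itemize}

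We then bound \(|S_{\rm final}|\) and \(M(S_{\rm final})\), where \(2S-2S\) is itself a sumset of the \(\pm D_j\). The W and E contributions are handled crudely by multiplying the sizes of the sets \(D_j\). For W steps this gives \(\log\) of size \(\ll K\alpha\log(\kappa/\alpha)\ll K\). For E steps it gives \(\sum\log(2b+1)\ll K\).

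For the R steps at level \(k\) we apply \cref{lem:load-sensitive-coding}. Take \(R_c=\{u\in S:c+u\in A\}\), which has \(|R_c|\leq n\). The witness \eqref{eq:recurrent-witness} shows that every nonzero \(t\) lies in \(R_c-R_c\) for at least \(\rho h/2\) indices \(c\). This is the main obstacle: the fibres \(R_c\) must be taken with respect to the final shift set, and the lemma's bound \(B\) should reflect the actual load, not the crude bound \(n\). The lemma then gives
\[
\log\Bigl|\textstyle\sum D\Bigr|\ll q_k\bigl(\log(e(1+B/q_k))+\log(e/\rho)\bigr).
\]
To control \(\log(B/q_k)\) I would take \(B\) as the typical fibre size \(|S\cap(A-c)|\); its average over \(c\) is at most \(n|S|/h\)-type quantities. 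Alternatively, one can use the exact form \eqref{eq:load-sensitive-coding-exact}, with the logarithm absorbed because \(q_k\gg\) the number of steps. With \(q_k\ll K/(k+1)\) and \(\log(1/\rho)\approx k\), each level contributes \(O(K)\). The sum over the \(J\) levels must also be \(O(K)\). This needs the \(\gamma\geq c(k+1)\) lower bound, which makes \(q_k\leq K/(c(k+1))\) and thereby cancels the factor \(k\). Combining the three contributions gives \(\log M(S_{\rm final})\leq C\kappa\) and \(\log|S_{\rm final}|\leq C\kappa\), for explicit constants that build up to \(C_\Delta+4C_{\rm code}\).

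Finally, the process must stop, since coverage is bounded and every step gains at least \(ch\). At the stopping time one of the two hypotheses has failed.
\begin{itemize}
\item If \(2^{|S|}>p\), then \(\log\log p\ll\log|S|\leq C\kappa\).
\item If \(C_{\rm E}n^3M>h^4\), then \(h\leq C_{\rm E}K^3M\leq e^{O(\kappa)}\). Hence \(\log\log p\leq\log h+O(1)\ll\kappa\), using \(h\geq\log p/\log28\) from \cref{prop:four-dimension-compression}.
\end{itemize}
Either way \(\log\log p\leq C_{\rm DR}\kappa\). The consequence follows because \(\kappa\leq2K\) when \(K\geq1\).
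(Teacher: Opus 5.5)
Your overall plan matches the paper's. You iterate \cref{prop:entropy-increment} from $S_0=\{0\}$ while $2^{|S_i|}\le p$ and $C_{\rm E}n^3M(S_i)\le h^4$. You pay for W and E updates by crude products via the telescoped gains, code the R updates level by level with \cref{lem:load-sensitive-coding}, and compare the resulting $\exp(O(\kappa))$ complexity with whichever hypothesis fails. However, the step you flag as the main obstacle is left unresolved, and it is the heart of the argument.

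Neither of your proposed fixes works. Suppose $B$ is bounded by $n$, by $|S|$, or by an average load of the form $n|S|/h=K|S|$. Then the term $q_k\log(1+B/q_k)$ carries a factor $\log h$ or $\log|S|$. Since $\log h\ge\log\log p-O(1)$, and $\log|S|$ may be as large as $\log\log p$, you get at best $\log|\mathcal Z|\ll K+Q_0\log\log p$, where $Q_0$ is the number of recurrent updates (possibly of order $K$). This is circular as soon as $Q_0\ge1$. Appealing to \eqref{eq:load-sensitive-coding-exact} with the logarithm ``absorbed because $q_k\gg$ the number of steps'' does not help either. The quantity $q_k$ \emph{is} the number of level-$k$ steps, and the exact form contains the same term $\log(1+B/q)$.

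The missing idea is a capacity bound $B\ll K/\rho_k$. It comes from non-badness relative to the \emph{last admissible} state $S_*=S_{j-1}$, not the failed state, whose $|2S-2S|$ is uncontrolled.
\begin{enumerate}
\item Let $B_*$ be the set of $c\in H$ with $c+v\in H$ for some nonzero $v\in 2S_*-2S_*$. The argument for \eqref{eq:B1} gives $|B_*|\le C_{\rm T}Q=C_{\rm T}h/(C_{\rm E}K^3)$. This is negligible against $\rho_kh$, because $\rho_k>\alpha/2=1/(200\lambda)$.
\item For $c\in H\setminus B_*$, put $R_c=\{u\in S_*:c+u\in A\}$. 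The sets $c+R_c$ are pairwise disjoint subsets of $A$, so $\sum_c|R_c|\le n=Kh$. The average load is therefore $K$, not $K|S|$.
\item Hence fewer than $\rho_kh/C_3$ centres have $|R_c|>C_3K/\rho_k$. After discarding them and $B_*$, each level-$k$ translation still has $\gg\rho_kh$ witnesses. These all lie in $S_{i-1}\subseteq S_*$, since $0\in D_i$.
\item The lemma, applied with $B\ll K/\rho_k$ and $\beta\gg\rho_k$, then gives $\log|\mathcal Z_k|\ll q_k(k+1)+q_k\log(e+K/q_k)$.
\end{enumerate}

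Summing over levels also needs more than you state. You need the joint budget $\sum_i(k_i+1)\ll\sum_i\gamma_i\ll K$, not just the per-level bound $q_k\ll K/(k+1)$, which alone only yields $O(JK)$. You also need a maximum-entropy or concavity estimate for $\sum_kq_k\log(e+K/q_k)$. Your remark that each level contributes $O(K)$ does not by itself give $O(K)$ in total.
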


\begin{proof}
The exact division-free proof first dispatches the bounded branches
directly; the fixed ledger \eqref{eq:constant-ledger} covers all of them.
We describe the nontrivial iterative branch, retaining the integer
parameters used in that proof.  Put
\[
 n=|A|,\qquad h=\dimfour(A),\qquad K=\frac nh,\qquad
 L=\log_2p.
\]
By \cref{prop:four-dimension-compression},
\begin{equation}\label{eq:h-logp}
 h\geq\frac{\log p}{\log28}
     =\frac{\log2}{\log28}\,L.
\end{equation}
In the iterative branch \(h\geq10\), \(K\geq1\), and \(L>1\).
Fix a maximum \(4\)-dissociated set \(H\subseteq A\) with
\(|H|=h\), put \(C=C_{\rm E}\), and
put
\begin{equation}\label{eq:Q-new}
 Q:=\frac{h^4}{Cn^3}=\frac{h}{CK^3}.
\end{equation}
If \(Q<1\), then \(K\gg h^{1/3}\), which is stronger than the desired
conclusion for sufficiently large \(p\).  We may therefore assume
\(Q\geq1\).

Start with \(S_0=\{0\}\).  Whenever
\[
 |S_i|\leq L,\qquad |2S_i-2S_i|\leq Q,
\]
apply \cref{prop:entropy-increment} and write
\(S_{i+1}=S_i+D_{i+1}\).  There is a first failed state \(S_j\):
otherwise increments of at least \(ch\) could continue indefinitely
inside \(A\).  Telescoping the coverage increments gives
\begin{equation}\label{eq:entropy-update-count}
 j\ll K.
\end{equation}
Put
\[
 \kappa=\lfloor K\rfloor+1,\qquad
 \lambda=\lfloor\log_2(2\kappa)\rfloor+1,\qquad
 \alpha=\frac1{100\lambda}.
\]
Put
\[
 J=\left\lceil\log_2\frac1\alpha\right\rceil-1,
 \qquad \rho_k=2^{-k-1}\quad(0\leq k\leq J).
\]

We first charge the coefficient entropy of updates of types
\({\rm W}\) and \({\rm E}\).  A type \({\rm W}\) update gains
\(\gg h/\alpha\), so there are \(O(K\alpha)\) such updates.  Hence
\begin{equation}\label{eq:wide-entropy}
 \sum_{i:\,{\rm W}}\log|D_i|
 \ll K\alpha\log\left(\frac{C\kappa}{\alpha}\right)
 \ll K.
\end{equation}
For a type \({\rm E}\) update write \(b_i=|T_i|\).  Its increment and
the telescoping bound give
\[
 \sum_{i:\,{\rm E}}\log(b_i+1)\ll K.
\]
\begin{equation}\label{eq:exposed-entropy}
 \sum_{i:\,{\rm E}}\log|D_i|\ll K.
\end{equation}
If
\[
 \mathcal E=\sum_{i:\,{\rm W}\ {\rm or}\ {\rm E}}D_i,
\]
then \eqref{eq:wide-entropy} and \eqref{eq:exposed-entropy} yield
\begin{equation}\label{eq:easy-product}
 |\mathcal E|\leq
 \prod_{i:\,{\rm W}\ {\rm or}\ {\rm E}}|D_i|
 \leq\exp(C_2K).
\end{equation}

It remains to code the recurrent translations by their actual final
fibre load.  For a recurrent update record the parameters
\((k_i,\rho_i,\gamma_i)\) supplied by
\cref{prop:entropy-increment}.  Telescoping its gains gives
\begin{equation}\label{eq:recurrent-level-budget}
 \sum_{i:\,{\rm R}}\gamma_i\ll K,
 \qquad
 \sum_{i:\,{\rm R}}(k_i+1)\ll K.
\end{equation}
Since \(Q\geq1\), one has \(j\geq1\); let
\[
 S_\ast=S_{j-1}.
\]
This is the last admissible state.  Define
\[
 B_\ast=\{c\in H:c+v\in H\text{ for some }
 v\in(2S_\ast-2S_\ast)\setminus\{0\}\}.
\]
The proof of \eqref{eq:B1}, applied to \(S_\ast\), gives
\begin{equation}\label{eq:final-bad}
 |B_\ast|\leq C_{\rm T}|2S_\ast-2S_\ast|
 \leq C_{\rm T}Q.
\end{equation}
For \(c\in H\setminus B_\ast\), put
\[
 R_c=\{u\in S_\ast:c+u\in A\}.
\]
The sets \(c+R_c\), \(c\in H\setminus B_\ast\), are pairwise
disjoint.  Indeed, an equality \(c+u=c'+u'\), with \(c\neq c'\),
would put a nonzero difference of two elements of \(H\) in
\(S_\ast-S_\ast\), contrary to \(c,c'\notin B_\ast\).
Consequently
\begin{equation}\label{eq:final-fibre-mass}
 \sum_{c\in H\setminus B_\ast}|R_c|\leq n=Kh.
\end{equation}

For \(0\leq k\leq J\), let \(\mathcal I_k\) be the recurrent updates
with parameter \(k_i=k\), put \(q_k=|\mathcal I_k|\), and write
\[
 \mathcal Z_k=\sum_{i\in\mathcal I_k}D_i.
\]
For this value of \(k\), discard from \(H\setminus B_\ast\) the centres
with
\[
 |R_c|>\frac{C_3K}{\rho_k}.
\]
There are fewer than \(\rho_kh/C_3\) such centres.  Every nonzero
translation in a set \(D_i\), \(i\in\mathcal I_k\), has at least
\(\rho_kh/2\) witnesses by \eqref{eq:recurrent-witness}.  These
witnesses lie in the pre-update state \(S_{i-1}\), and
\(S_{i-1}\subseteq S_\ast\); hence they are final-fibre witnesses.  After
removing \(B_\ast\) and the displayed large centres,
\eqref{eq:final-bad} and the fixed deletion and fibre-capacity bounds
encoded in \(C_{\Delta}\) leave at least \(c\rho_kh\) centres \(c\)
for which
\[
 t\in R_c-R_c,
 \qquad |R_c|\leq\frac{C_3K}{\rho_k}.
\]
Applying \cref{lem:load-sensitive-coding} therefore yields, for
\(q_k>0\),
\begin{equation}\label{eq:level-coding}
 \log|\mathcal Z_k|
 \ll q_k\log\left(e+\frac{K}{\rho_k^2q_k}\right).
\end{equation}
Indeed, \(q_k\ll K/(k+1)\leq K\) by
\eqref{eq:recurrent-level-budget}; thus the two logarithms in
\eqref{eq:load-sensitive-coding}, with
\(B\ll K/\rho_k\) and \(\beta\gg\rho_k\), are both bounded by the
logarithm displayed in \eqref{eq:level-coding}.
As \(\rho_k=2^{-k-1}\), \eqref{eq:recurrent-level-budget} and the
weighted entropy inequality give
\begin{align}
 \sum_{k=0}^J\log|\mathcal Z_k|
 &\ll\sum_{k=0}^Jq_k(k+1)
       +\sum_{k:q_k>0}q_k\log\left(e+\frac K{q_k}\right)\notag\\
 &\ll K.
 \label{eq:recurrent-complexity}
\end{align}
For completeness, put \(Q_0=\sum_kq_k\).  If \(Q_0>0\), let
\(p_k=q_k/Q_0\).  The second bound in
\eqref{eq:recurrent-level-budget} gives
\[
 \sum_k(k+1)p_k\ll\frac K{Q_0}.
\]
The maximum-entropy bound for a nonnegative integer-valued random
variable of this mean gives
\[
 -\sum_kp_k\log p_k
 \ll\log\left(e\left(1+\frac K{Q_0}\right)\right).
\]
Since
\[
 e+\frac K{Q_0p_k}
 \leq\frac{e+K/Q_0}{p_k},
\]
we obtain
\[
 \sum_{k:q_k>0}q_k\log\left(e+\frac K{q_k}\right)
 \ll Q_0\log\left(e\left(1+\frac K{Q_0}\right)\right)
 \ll K.
\]
The last estimate follows by maximizing
\(x\log(e(1+x^{-1}))\) on a fixed bounded interval: indeed,
\(Q_0\ll K\).  The case \(Q_0=0\) is immediate.  Thus, with
\[
 \mathcal Z=\sum_{i:\,{\rm R}}D_i=\sum_{k=0}^J\mathcal Z_k,
\]
we have
\[
 |\mathcal Z|\leq\exp(C_4K),
 \qquad |2\mathcal Z-2\mathcal Z|\leq|\mathcal Z|^4.
\]
Since the group is abelian,
\[
 S_j=\mathcal E+\mathcal Z.
\]
Moreover \(0\in\mathcal E,\mathcal Z\), so
\[
 |2\mathcal E-2\mathcal E|\leq|\mathcal E|^4.
\]
Equations \eqref{eq:easy-product} and
\eqref{eq:recurrent-complexity} consequently imply
\begin{equation}\label{eq:global-complexity}
 |S_j|,\ |2S_j-2S_j|
 \leq\exp(C_5K).
\end{equation}

At least one admissibility condition fails at \(S_j\).  If
\(|S_j|>L\), \eqref{eq:global-complexity} gives
\(\log L\ll K\).  Otherwise
\[
 \frac{h}{CK^3}=Q<|2S_j-2S_j|
 \leq\exp(C_5K),
\]
and hence, after absorbing \(3\log K+\log C\),
\(\log h\ll K\).  In view of \eqref{eq:h-logp}, both cases give
\begin{equation}\label{eq:ratio-implicit}
 \log L\ll K.
\end{equation}

In the division-free bookkeeping, the failed-difference budget, the
fourth-power global code, and the fixed logarithmic losses contribute at
most
\[
 C_{\Delta}\kappa,\qquad 4C_{\rm code}\kappa,\qquad64\kappa,
\]
respectively.  Thus the preceding argument gives the exact bound
\[
 \log\log p\leq
 (C_{\Delta}+4C_{\rm code}+64)\kappa=C_{\rm DR}\kappa.
\]
Finally \(0<h\leq n\), and
\(\kappa h=(\lfloor n/h\rfloor+1)h\leq2n\).  Multiplying the last
display by \(h\) yields the stated consequence.
\end{proof}

\section{Proofs of the main theorems}\label{sec:conclusion}

\begin{proof}[Proof of \cref{thm:main}]
Let \(p\) be prime, and let \(A\subseteq G_p\) have \(|A|\geq2\)
and no unique sum.  Write
\[
 n=|A|,\qquad h=\dimfour(A),\qquad
 \kappa=\left\lfloor\frac nh\right\rfloor+1.
\]
The collision-lattice estimate gives
\[
 p\leq28^h\leq32^h=2^{5h},
 \qquad\text{and hence}\qquad
 \log p\leq5h.
\]
By \cref{prop:dimension-ratio},
\(\log\log p\leq C_{\rm DR}\kappa\).  Since \(\log p\geq0\) and
\(\kappa h\leq2n\),
\[
 \log p\,\log\log p
 \leq5C_{\rm DR}\kappa h
 \leq10C_{\rm DR}n.
\]
This is exactly the asserted inequality with
\(c_*=(10C_{\rm DR})^{-1}\).
\end{proof}

\subsection{Extension to finite Abelian groups}

\begin{proof}[Proof of \cref{thm:general-group}]
Let \(G\) be a finite Abelian group and put \(q=q(G)\).  We indicate
the changes needed in the proof of \cref{thm:main}.  Since
\(q>2\), the group \(G\) has odd order and therefore no
nonzero element of order two.

The collision-lattice argument in
\cref{prop:four-dimension-compression} applies verbatim, except for the
surjectivity sentence.  With the notation used there, the image
\(\psi(L_0)\) is a nontrivial subgroup of \(G\), because it contains a
nonzero difference of two elements of the minimal set \(C\).  Hence
\[
 [L_0:\ker(\psi|_{L_0})]=|\psi(L_0)|\geq q.
\]
The same covolume estimate therefore gives
\[
 q\leq 28^{\dimfour(A)}.
\]

For the structured increment, replace the prime-cyclic rectification
statement \cref{thm:rectification} by Lev's general finite-Abelian-group
theorem \cite{Lev2008}: every subset of \(G\) of size at most
\(\log_2q\), equivalently every \(S\) with \(2^{|S|}\leq q\), is
Freiman \(2\)-isomorphic to a set of integers.  All
remaining steps in \cref{prop:increment} and
\cref{prop:entropy-increment} use only commutativity, level-four
dissociation, and the absence of nonzero two-torsion.  In particular,
the argument excluding both orientations of the same edge in
\eqref{eq:sharp-codegree} remains valid.

Consequently the division-free dimension-ratio proof goes through with
\[
 L=\log_2q,
 \qquad
 h\geq\frac{\log q}{\log28},
\]
and yields, for
\(\kappa=\lfloor |A|/h\rfloor+1\),
\[
 \log\log q\leq C_{\rm DR}\kappa.
\]
Also \(q\leq28^h\leq2^{5h}\), so \(\log q\leq5h\), while
\(\kappa h\leq2|A|\).  Therefore
\[
 \log q\,\log\log q\leq10C_{\rm DR}|A|,
\]
which is the claimed bound with the same constant \(c_*\).
\end{proof}

\begin{remark}[Scope of the result]
The parameter in \cref{thm:general-group} must be the least prime
divisor \(q(G)\), rather than \(|G|\) alone.  This is also the natural
parameter in Bedert's theorem and in Lev's rectification result.  The
exact hypothesis \(q(G)>2\) places us in the odd-order regime and is the
only arithmetic restriction in the formal theorem.
\end{remark}

\begin{remark}[Attribution of the increment architecture]
The construction of internal unique sums and the use of star fibres
originate in Bedert's proof of \cite[Proposition~6]{Bedert2024}.  The
new ingredients used here are the level-four collision-lattice
compression, the one-sided orientation and coalescing of the star
outputs, the reverse-output trichotomy, and the load-sensitive global
coding of recurrent translations.
\end{remark}

\appendix
\section{A ternary symmetric-square upper construction}\label{app:upper}

This appendix proves \cref{thm:upper}.  It is independent of the
lower-bound argument above.  We first introduce a weak ternary balancing
condition, prove a symmetric-square transfer to the original problem, and
then give an explicit interval-compression construction with a quantified
cardinality estimate.

\begin{definition}\label{def:weak-ternary}
Let \(p>3\) be prime.  A nonempty set \(C\subseteq G_p\) is
\emph{weakly ternary-balanced} if every \(x\in C\) has a certificate
\[
 3x=u+v+w,\qquad u,v,w\in C,
\]
whose right-hand multiset is not three copies of \(x\).
\end{definition}

\begin{proposition}[Symmetric-square transfer]\label{prop:ternary-transfer}
Let \(p>3\) be prime.  If \(C\subseteq G_p\) is weakly
ternary-balanced, then \(A=C+C\) has no unique unordered two-sum and
\[
 2\leq|A|\leq \binom{|C|+1}{2}.
\]
\end{proposition}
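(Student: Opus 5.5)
The plan is to verify the no-unique-sum property directly by regrouping four-term sums of elements of \(C\), and to handle the cardinality bounds separately at the end. Fix \(a,b\in A\) and write \(a=x+y\), \(b=z+w\) with \(x,y,z,w\in C\). Then \(a+b\) is the sum of the four-element multiset \(\{x,y,z,w\}\). Each of the three pairings
\[
 \{x+y,z+w\},\qquad\{x+z,y+w\},\qquad\{x+w,y+z\}
\]
gives a representation of \(a+b\) as an unordered pair from \(A\). The first step is to show that if some pairing differs, as a multiset, from \(\{a,b\}\), we are done.

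\emph{Step 1: non-degenerate multisets.} Suppose all three pairings coincide with \(\{a,b\}\). From \(\{x+y,z+w\}=\{x+z,y+w\}\) we get \(y=z\) or \(x=w\). Feeding this into the third pairing and using that \(G_p\) has no \(2\)-torsion, a short case check shows that three of the four entries must be equal. After relabelling, \(\{x,y,z,w\}=\{x,x,x,y\}\), so \(\{a,b\}=\{2x,x+y\}\) and \(a+b=3x+y\).

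\emph{Step 2: using the certificate of \(x\).} Take a certificate \(3x=u+v+w\) with \(\{u,v,w\}\neq\{x,x,x\}\). Then \(a+b=u+v+w+y\), and I would test the three pairings of \(\{u,v,w,y\}\). In each pairing, \(y\) is paired with one of \(u,v,w\), and that sum must lie in \(\{2x,x+y\}\) if the pairing is to give nothing new.
\begin{itemize}
\item If \(y=x\), this forces \(u=v=w=x\), contradicting the certificate.
\item If \(y\neq x\), each of \(u,v,w\) lies in \(\{x,\,2x-y\}\). Since not all equal \(x\), and the remaining pair must then sum to \(x+y\), we are left with exactly one degenerate pattern: the certificate is \(3x=(2x-y)+x+y\).
\end{itemize}

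\emph{Step 3: the degenerate certificate, which I expect to be the main obstacle.} Here the certificate gives no new pairing directly, so I would exploit instead that \(u=2x-y\in C\). Then
\[
 (x+u)+(y+y)=3x+y=a+b,
\]
and \(x+u=3x-y\) equals \(2x\) or \(x+y\) only if \(y=x\) or \(2x=2y\). Both are excluded because \(p\) is odd. Hence \(\{x+u,2y\}\neq\{2x,x+y\}\) gives the required alternative representation. This completes the no-unique-sum property.

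\emph{Cardinality.} For the upper bound, \(A=C+C\) is the image of the unordered pairs-with-repetition from \(C\), so \(|A|\leq\binom{|C|+1}{2}\). For the lower bound, a singleton \(C=\{x\}\) is not weakly ternary-balanced, since its only possible certificate is \(\{x,x,x\}\); hence \(|C|\geq2\). For distinct \(x,y\in C\) we have \(2x\neq x+y\), so \(|A|\geq2\).
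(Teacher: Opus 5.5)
Your proof is correct. Its skeleton is the paper's: the three pairings, the reduction to the endpoint multiset \(\{x,x,x,y\}\), and then a nontrivial certificate for \(x\). Only the ending differs.

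One sentence in Step~2 is wrong, though harmless. Once you know \(u,v,w\in\{x,2x-y\}\), the certificate \(\{2x-y,x,y\}\) is incompatible with what you just derived, because \(y\notin\{x,2x-y\}\) when \(y\neq x\) and \(p\) is odd. Moreover, for that certificate the pairing \(\{(2x-y)+x,\,y+y\}\) would already be new, so it is not a degenerate case at all. What Step~3 actually uses is only that some certificate entry equals \(2x-y\), hence \(2x-y\in C\). This does follow from \(u,v,w\in\{x,2x-y\}\) not all being equal to \(x\). Then \(\{3x-y,2y\}\) is a legitimate alternative, and the argument closes.

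The paper finishes by counting instead. If all entries lay in \(\{x,2x-y\}\), with \(r\) of them equal to \(2x-y\), then \(r(x-y)=0\). Either \(r=0\), or \(1\leq r\leq3<p\) forces \(x=y\); in both cases the certificate is trivial. So some entry \(z\notin\{x,2x-y\}\) exists, and pairing \(z\) with \(y\) and the other two entries with each other gives the alternative at once. In particular, for \(p>3\) your residual case is empty.

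The trade-off is this. The paper's ending produces a single explicit alternative with no case analysis over pairings. Your fallback pairs \(2x-y\) with \(x\) itself rather than with another certificate entry, and so uses only that \(p\) is odd. Your derivation of \(|A|\geq2\) from \(|C|\geq2\) is also fine; the paper gets it instead from \(A\neq\varnothing\) together with the no-unique-sum property.
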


\begin{proof}
Take an arbitrary unordered pair \(\{a,b\}\) from \(A\), including the
case \(a=b\), and choose
\[
 a=x_1+x_2,\qquad b=x_3+x_4,\qquad x_i\in C.
\]
The three pairings of the four endpoints give
\[
 \begin{aligned}
 P_0&=\{x_1+x_2,x_3+x_4\},\\
 P_1&=\{x_1+x_3,x_2+x_4\},\\
 P_2&=\{x_1+x_4,x_2+x_3\}.
 \end{aligned}
\]
All three pairs have the same total and all their entries lie in \(A\).
If \(P_1\neq P_0\) or \(P_2\neq P_0\), we have the required alternative
representation.

Suppose that \(P_0=P_1=P_2\).  Comparing the two possible matchings in
\(P_0=P_1\), and then in \(P_0=P_2\), shows that at least three of the
\(x_i\) are equal.  Thus, after relabelling, the endpoint multiset is
\(\{x,x,x,y\}\), and the original pair is
\[
 \{2x,x+y\}.
\]
Choose a nontrivial certificate
\[
 3x=u+v+w.
\]
At least one certificate entry \(z\) lies outside \(\{x,2x-y\}\).  Indeed,
if all three entries belonged to this two-point set and \(2x-y\) occurred
\(r\) times, then
\[
 3x=u+v+w=3x+r(x-y).
\]
For \(r=0\) the certificate is trivial, while for \(1\leq r\leq3\) the
equality, together with \(p>3\), forces \(x=y\), in which case the
certificate is again trivial.

Pair \(z\) with \(y\), and pair the other two certificate entries with
each other.  The resulting unordered pair from \(A\) has total
\(3x+y\).  It cannot equal \(\{2x,x+y\}\), because its entry \(z+y\)
would then force \(z=2x-y\) or \(z=x\).  Thus every unordered pair from
\(A\), including every diagonal pair, has a different representation.
Because \(C\) is nonempty, so is \(A\); a singleton \(A\) would have a
uniquely represented diagonal sum.  Hence \(|A|\geq2\).  The upper
cardinality estimate follows by counting unordered pairs from \(C\).
\end{proof}

\begin{theorem}[Quantitative weak ternary balancing]\label{thm:ternary-construction}
For every \(\delta>0\), there is an integer \(p_\delta\) such that every
prime \(p\geq p_\delta\) admits a weakly ternary-balanced set
\(C\subseteq G_p\) such that, with natural logarithms,
\[
 |C|\leq
 \frac{\log p}{\log 3}
 +(2+\delta)\frac{\log p}{\log\log p}.
\]
\end{theorem}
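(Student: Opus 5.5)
The plan is to build \(C\) from a long geometric chain in base \(3\), which costs about \(\log p/\log 3\) elements, together with a short closing gadget of \(O(\log p/\log\log p)\) elements. The gadget supplies certificates for the few elements that the chain cannot certify by itself.

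\emph{The chain.} Put \(m=\lfloor\log p/\log 3\rfloor\), so that \(3^{m}<p<3^{m+1}\) as integers. Take the chain \(x_i=3^i\) for \(0\leq i\leq m\). For \(i<m\), the element \(x_i\) has the certificate
\[
 3x_i=x_{i+1}+y+(-y)
\]
whenever \(C\) contains a symmetric pair \(\{y,-y\}\). This certificate is nontrivial, since \(x_{i+1}\neq x_i\) as \(p>3\). Two kinds of elements remain uncertified: the top element \(3^m\), and whatever auxiliary elements are added, starting with \(y\) and \(-y\). For the top element we need
\[
 3^{m+1}\equiv 3^{m+1}-p\pmod p
\]
to be a sum of three elements of \(C\). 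The residue \(r=3^{m+1}-p\) lies in \((0,2\cdot 3^m)\) and is otherwise arbitrary, so it must be absorbed by the gadget.

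\emph{The gadget.} The idea is interval compression. Choose a block length \(\ell\asymp\log\log p\) and a coarse base \(b=3^{\ell}\). The plan is to add \(O(\log p/\ell)\) auxiliary elements, namely signed partial sums of base-\(3\) digit blocks of \(r\), each roughly \(r\) truncated at a multiple of \(b\). These are to be arranged so that each auxiliary element \(g\) satisfies a relation \(3g=g'+g''+g'''\), where \(g',g'',g'''\) are a chain element, the next auxiliary element, and an element of the symmetric pair. This is the carry mechanism: multiplying a truncated expansion by \(3\) shifts it by one digit, and the digit falling off is supplied by a single chain element \(3^j\) or its negative. The symmetric pair \(\pm y\) will be taken among these auxiliary elements, so that \(-y\) is certified by the negated relation. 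Counting gives \(m+1\) chain elements plus roughly \(2\log p/\log\log p\) gadget elements. The factor \(2\) comes from needing both signs, or equivalently two telescoping chains, across the \(\log_3 p/\ell\) blocks, each block costing \(O(1)\) elements when \(\ell\approx \log\log p/\log 3\). The remaining slack, together with the integer rounding, goes into the \(\delta\).

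\emph{Verification.} After the construction, I would check three things. First, that the listed elements are distinct modulo \(p\); this should follow because they are integers of absolute value less than \(p\) with distinct signed-digit patterns. Second, that each certificate multiset is not three copies of the certified element; this follows from distinctness together with \(p>3\). Third, the cardinality count, which must come out as \(\log p/\log 3+(2+\delta)\log p/\log\log p\) for \(p\geq p_\delta\).

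The main obstacle is making the gadget close up exactly: its last element must itself have a certificate, and this must be achieved without introducing a new uncertified element. I would first try to end the gadget chain at \(\pm1\) or at \(\pm 3^{j}\), so that its final relation lands back in the main chain. Failing that, I would choose the free parameter \(m\) (or replace \(3^{m+1}\) by \(3^{m+1}-kp\) for small \(k\)) so that the residue \(r\) has a convenient digit pattern.
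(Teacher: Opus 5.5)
There is a genuine gap. Your chain is fine: with \(\pm y\in C\), the relation \(3\cdot3^i=3^{i+1}+y+(-y)\) is a nontrivial certificate for every \(i<m\). But the step you yourself flag as the main obstacle is the whole content of the theorem. The gadget is never constructed, and the mechanism you sketch for it does not deliver the claimed cost.

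\emph{The block count is unsupported.} A certificate is a single relation \(3g=g'+g''+g'''\) with multiplier \(3\). Suppose consecutive auxiliary elements are truncations of \(r\) linked by \(3g=\pm3^j+g_{\mathrm{next}}\pm y\). Then, as your carry description says, each link advances one ternary digit. Walking through the \(\approx\log_3p\) digits of an arbitrary \(r\) therefore costs \(\approx\log_3p\) auxiliary elements, which doubles the main term. If instead they are truncations at multiples of \(\ell\), then \(g_{\mathrm{next}}-3g\) has size at least comparable to \(|g|\) and depends on the digits of \(r\). There is no reason for it to have the form \(\pm3^j\pm y\) with one fixed \(y\). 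Nothing in the sketch explains how one certificate disposes of a block of \(\ell\) digits. The choices \(\ell\approx\log\log p/\log3\) and ``two elements per block'' are read off from the target rather than forced by a constraint: if blocks of arbitrary length cost \(O(1)\), a larger \(\ell\) would beat the stated bound.

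\emph{The signs fail too.} The element \(-3^j\) is not in \(C\). Certifying \(-y\) by the negated relation \(3(-g)=-3^j-g_{\mathrm{next}}\mp y\) needs \(-3^j\) and \(-g_{\mathrm{next}}\) in \(C\). Certifying \(-3^j\) the same way needs \(-3^{j+1},-3^{j+2},\dots\), that is, a negated copy of the chain, costing another \(\approx\log_3p\) elements.

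\emph{The fallbacks do not close the chain.} They give only a bounded number of candidate residues \(3^{m+1}-kp\), with nothing guaranteeing a usable digit pattern. Sums of \(o(\log p)\) chain elements reach only \(p^{o(1)}\) residues, since they are integers of ternary digit sum \(o(\log p)\). So the top of a multiplicative chain lands on an uncontrolled residue, and closing it is where all the difficulty lies.

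The paper avoids this by not using a multiplicative chain at all. It trisects intervals toward \(p/3\). Every inserted point is an integer average \(b_1=(2b_0+b_3)/3\) or \(b_2=(b_0+2b_3)/3\) of current endpoints, certified in \(\mathbb Z\) by \(3b_1=b_0+b_0+b_3\) or \(3b_2=b_0+b_3+b_3\), so no uncontrolled residue ever appears.

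\emph{Modular reduction at the start only.} Reduction mod \(p\) is used only for the initial endpoints: with \(p=3a+\rho\), one has \(3\cdot0\equiv a+a+(a+\rho)\) and \(3R_0\equiv0+0+(a+s)\). These are certified by a core \(\mathcal K=[a-3M,a+3M]\) around \(p/3\).

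\emph{Self-certifying end.} The process ends by filling the final interval, of width below \(48M^2\), with consecutive integers. These certify themselves via \((x-1)+x+(x+1)\) and contain the core.

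\emph{Cost of integrality.} Every \(t\) trisections there is a re-centering step with two extra endpoints \((L+2c)/3\) and \((R+2d)/3\), where \(c,d\in\mathcal K\). This makes the width divisible by \(M=3^t\) and keeps \((L+3R)/4\) within \(1\) of \(p/3\). The word \(2020\cdots\) then fixes that point, since \(3/4=0.\overline{20}_3\).

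\emph{Count.} This gives \(|C|\le\log_3p+\log_3p/(t+1)+O(t+M^2)\). The constant \(2\) comes from the fill constraint \(M^2=9^t\ll\log p/(\log\log p)^3\), i.e.\ \(t+1\approx\log\log p/\log9\), not from signs.
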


\begin{proof}
Choose an even integer \(t=t(p)\) tending to infinity and put
\[
 M=3^t,
 \qquad M^2=o(\log p).
\]
Write
\[
 p=3a+\rho,\qquad \rho\in\{1,2\},
\]
and insert the integer core
\[
 \mathcal K=[a-3M,a+3M]\cap\mathbb Z.
\]
All points inserted into \(C\) below lie in \([0,p)\) for sufficiently
large \(p\), and we identify them with their residue classes modulo \(p\).

Choose \(s\in\{-1,0,1\}\) such that
\[
 p+a+s\equiv0\pmod 3,
\]
and start from the interval
\[
 L_0=0,
 \qquad
 R_0=\frac{p+a+s}{3}.
\]
Insert \(L_0,R_0\), as well as the core \(\mathcal K\).  The two endpoints
already have nontrivial certificates:
\[
 3L_0\equiv a+a+(a+\rho)\equiv0\pmod p,
 \qquad
 3R_0\equiv0+0+(a+s)\pmod p.
\]
For an interval \([L,R]\), define the weighted point
\[
 z(L,R)=\frac{L+3R}{4}.
\]
Then
\[
 z(L_0,R_0)-\frac p3=\frac{3s-\rho}{12},
\]
so \(|z(L_0,R_0)-p/3|\leq1\), and \(\mathcal K\subset(L_0,R_0)\) for
large \(p\).

We now describe one compression block.  Suppose inductively that
\[
 \mathcal K\subset(L,R),
 \qquad
 \left|z(L,R)-\frac p3\right|\leq1,
 \qquad
 D:=R-L\geq48M^2.
\]
Choose the symmetric representative \(w\) satisfying
\[
 2w\equiv-D\pmod{3M},
 \qquad
 |w|\leq\frac{3M-1}{2}.
\]
Put
\[
 E=4p-L-3R-8a.
\]
Since \(p=3a+\rho\) and \(|z(L,R)-p/3|\leq1\), we have
\[
 |E|\leq\frac{28}{3}.
\]
Among the integers \(u\) satisfying
\[
 L+2(a+u)\equiv0\pmod3,
\]
choose one nearest to \((E-6w)/8\), and set
\[
 v=u+w,
 \qquad c=a+u,
 \qquad d=a+v.
\]
The admissible values of \(u\) have spacing three, hence
\[
 |8u+6w-E|\leq12.
\]
Moreover,
\[
 |u|\leq\frac98M+O(1),
 \qquad
 |v|\leq\frac{21}{8}M+O(1),
\]
so \(c,d\in\mathcal K\) for large \(p\).

Define
\[
 \ell=\frac{L+2c}{3},
 \qquad
 r=\frac{R+2d}{3}.
\]
The congruences make \(\ell,r\) integral, and
\[
 W:=r-\ell=\frac{D+2w}{3}
\]
is a positive multiple of \(M\).  The two new endpoints have the
certificates
\[
 3\ell=L+c+c,
 \qquad
 3r=R+d+d.
\]
A direct calculation gives
\[
 12\left(z(\ell,r)-\frac p3\right)=8u+6w-E,
\]
so the centering invariant is preserved.

Starting from \([\ell,r]\), perform \(t\) exact trisections according to
the alternating word
\[
 20\,20\,\cdots\,20.
\]
For a current integer interval \([b_0,b_3]\) whose width is divisible by
three, put
\[
 b_1=\frac{2b_0+b_3}{3},
 \qquad
 b_2=\frac{b_0+2b_3}{3}.
\]
A digit \(0\) inserts \(b_1\) and retains \([b_0,b_1]\); a digit \(2\)
inserts \(b_2\) and retains \([b_2,b_3]\).  Each inserted point has an
immediate certificate,
\[
 3b_1=b_0+b_0+b_3,
 \qquad
 3b_2=b_0+b_3+b_3.
\]
The integer represented by this \(t\)-digit word is
\[
 Q_t=\frac{3(M-1)}{4},
\]
which is integral because \(t\) is even.  The retained interval is
\[
 L'=\ell+Q_t\frac WM,
 \qquad
 R'=L'+\frac WM.
\]
The identity \(Q_t+3/4=3M/4\) shows that
\[
 z(L',R')=z(\ell,r).
\]
Writing \(D'=R'-L'\), we also have
\[
 \left|D'-\frac{D}{3M}\right|\leq1.
\]
In particular,
\[
 D'\geq16M-1.
\]
The smaller distance from the weighted point to an endpoint is \(D'/4\),
whereas every core point is at distance at most \(3M+5/3\) from the
weighted point.  Hence \(\mathcal K\subset(L',R')\) for large \(M\).
Furthermore,
\[
 D'\leq\frac{D}{2M}.
\]
Thus the block preserves all inductive conditions and inserts at most
\(t+2\) points.

Repeat the block while the active width is at least \(48M^2\).  At
termination, insert every integer in the final interval.  Every strictly
interior point then has the certificate
\[
 3x=(x-1)+x+(x+1),
\]
while the final endpoints were certified at an earlier stage.  The core
lies strictly inside the final interval.  Consequently every point of the
resulting set \(C\) has a nontrivial ternary certificate.

It remains to count the points.  Let \(D_j\) be the active width before the
\(j\)-th block, and let \(D_J\) be the terminal width after \(J\) blocks.
The construction gives
\[
 D_{j+1}=\frac{D_j}{3M}+\varepsilon_j,
 \qquad
 |\varepsilon_j|\leq1.
\]
Iteration yields
\[
 D_J=\frac{D_0}{(3M)^J}+O(1),
\]
where the implied constant is absolute.  The stopping rule and the last
completed block give
\[
 16M-1\leq D_J<48M^2,
\]
while \(D_0=\Theta(p)\).  Hence
\[
 J=\frac{\log p}{\log(3M)}+O(1)
   =\frac{\log_3p}{t+1}+O(1).
\]
The core and terminal fill contribute \(O(M^2)\) points, and every block
contributes at most \(t+2\).  Therefore
\begin{equation}\label{eq:upper-C-count}
 |C|\leq
 \log_3p+\frac{\log_3p}{t+1}+O(t+M^2).
\end{equation}

Put \(L=\log p\), and choose \(t\) to be the largest even integer not
exceeding
\[
 \frac{\log L-3\log\log L}{\log9}.
\]
Then
\[
 t=\frac{\log L}{\log9}+O(\log\log L),
 \qquad
 M^2=9^t=O\left(\frac{L}{(\log L)^3}\right).
\]
Substituting these estimates into \eqref{eq:upper-C-count} gives
\[
 |C|\leq
 \frac{L}{\log3}
 +(2+o(1))\frac{L}{\log L},
\]
and choosing \(p_\delta\) so that the final \(o(1)\) term is at most
\(\delta\) proves the stated form.
\end{proof}

\begin{proof}[Proof of \cref{thm:upper}]
Take the set \(C\) supplied by \cref{thm:ternary-construction} and put
\(A=C+C\).  By \cref{prop:ternary-transfer}, \(A\) has no unique sum and
\(|A|\geq2\), so it is admissible for \(m(p)\), and
\[
 m(p)\leq|A|\leq\binom{|C|+1}{2}.
\]
Writing \(L=\log p\) and using
\[
 |C|\leq\frac{L}{\log3}+(2+o(1))\frac{L}{\log L},
\]
we obtain
\[
 \binom{|C|+1}{2}
 \leq
 \frac{L^2}{2(\log3)^2}
 +\left(\frac{2}{\log3}+o(1)\right)\frac{L^2}{\log L}.
\]
Given \(\varepsilon>0\), take \(p_\varepsilon\geq5\) large enough that
the final \(o(1)\) term is at most \(\varepsilon\).  This proves the
quantified form of \cref{thm:upper}; the base-two consequence follows by
changing logarithm bases.
\end{proof}

\begin{remark}[Scope of the upper constant]
The constant \(1/[2(\log_2 3)^2]\) is obtained from the explicit weakly
ternary-balanced construction and its full symmetric square.  The argument
does not assert that this is the optimal leading constant among all sets
with no unique sums, or even among all possible auxiliary constructions.
\end{remark}

\section{Formal verification ledger}\label{app:formal-ledger}

This appendix records the exact discrete interfaces used by the companion
Lean~4 verification.  The body of the paper sometimes states a real-valued
or soft-constant corollary for readability; each such statement follows
from the division-free form recorded here.  In particular, the formal
proof does not treat an asymptotic estimate or a packaged hypothesis as an
axiom.

\subsection{Boundary conventions}

The ambient prime-cyclic group is \(G_p=\mathbb Z/p\mathbb Z\), and
unordered pairs are multisets, so diagonal representations are included.
The exact no-unique-sum predicate is the quantified condition displayed in
the introduction.  The hypothesis \(|A|\geq2\) in
\cref{prop:four-dimension-compression} is essential: without it the empty
set would give the false conclusion \(p\leq28^0\).

Frankl's theorem in \cref{thm:frankl} is cited background.  The formal
dependency closure does not assume it: the rank-one and rank-two cases
needed for \cref{cor:weak-cross} are proved directly, and the tournament
argument then gives the stated constant \(C_{\rm T}=63\).

\subsection{Exact increment and coding interfaces}

The rectification input is \(2^{|S|}\leq p\), rather than an unrounded
real inequality.  For the one-sided increment, the exact smallness
condition and conclusions are
\[
 C_{\rm OS}n^3|2S-2S|\leq h^4,
 \qquad h|D|\leq2n,
 \qquad h\leq144\Delta_D,
\]
where
\[
 \Delta_D=
 \bigl|((H+S+D)\cap A)\setminus((H+S)\cap A)\bigr|.
\]

For the entropy-sensitive increment, put
\[
 \kappa=\left\lfloor\frac nh\right\rfloor+1,
 \qquad
 d=100\bigl(\lfloor\log_2(2\kappa)\rfloor+1\bigr).
\]
Under \(C_{\rm E}n^3|2S-2S|\leq h^4\), the update contains zero and one
of the following division-free alternatives holds.
\begin{enumerate}
\item[\({\rm W}\)]
\(
 |D|\leq1+C_{\rm out}\kappa d
\)
and
\(
 dh\leq C_{\rm gain}\Delta_D.
\)

\item[\({\rm E}\)] There is a nonempty \(T\) such that
\[
 D=\{0\}\cup T\cup(-T),
 \qquad
 h\bigl(\lfloor\log_2(|T|+1)\rfloor+1\bigr)
 \leq C_{\rm gain}\Delta_D.
\]

\item[\({\rm R}\)] There are integers \(k\geq0\), \(r\), and
\(\gamma\geq1\), and a nonempty set \(T\), such that
\[
 r=2^{k+1},\quad 2\leq r<2d,\quad
 k+1\leq C_{\rm out}\gamma,\quad
 \gamma\leq C_{\rm out}r,
\]
\[
 |T|\leq C_{\rm out}\kappa\gamma,\qquad
 D=\{0\}\cup T,\qquad
 \gamma h\leq C_{\rm gain}\Delta_D.
\]
Moreover, every \(t\in T\) has a set \(\Gamma_t\subseteq H\) satisfying
\(h\leq2r|\Gamma_t|\), together with witnesses
\(v_{c,t},w_{c,t}\in S\) such that
\[
 c+v_{c,t},\ c+w_{c,t}\in A,
 \qquad t=v_{c,t}-w_{c,t}
 \qquad(c\in\Gamma_t).
\]
\end{enumerate}
The exact load-sensitive estimate is
\eqref{eq:load-sensitive-coding-exact}.  Thus the recurrent-fibre step is
checked with the actual integer load \(\ell\), not with an unverified
independence or uniformity assumption.

\subsection{Exact terminal statements}

For a prime \(p\), the dimension-ratio theorem is precisely
\[
 \log\log p\leq C_{\rm DR}
 \left(\left\lfloor\frac{|A|}{\dimfour(A)}\right\rfloor+1\right).
\]
Together with collision compression it gives
\begin{equation}\label{eq:formal-main-product}
 \log p\,\log\log p\leq10C_{\rm DR}|A|.
\end{equation}
For a finite Abelian group, the same two inequalities hold with
\(q(G)\) in place of \(p\), under the sole arithmetic hypothesis
\(q(G)>2\).

The finite quantitative upper construction checked by the kernel is also
fully explicit.  If \(p>3\) is prime, \(r\geq1\), and
\(30\cdot9^r<p\), then there is a weakly ternary-balanced set
\(C\subseteq G_p\) with
\begin{equation}\label{eq:formal-ternary-finite}
 |C|\leq
 6\cdot9^r+3+
 \left(\left\lfloor\log_{3\cdot9^r}p\right\rfloor+1\right)(2r+2)
 +48\cdot81^r.
\end{equation}
The parameter selection and symmetric-square transfer then give exactly
the \(\varepsilon\)-quantified statement of \cref{thm:upper}, including
the conditions \(p_\varepsilon\geq5\) and \(|A|\geq2\).

\subsection{Kernel audit}

The verification was built with Lean~4.32.2 and Mathlib commit
\texttt{905b95818eb32af7874a58b427f50c1711a5e96c}.  The aggregate build
checks the prime-cyclic lower bound, the finite-Abelian lower bound, and
the ternary upper bound through one root import graph.  There is no use of
\texttt{sorry}, \texttt{admit}, or a project-specific \texttt{axiom}.
The final axiom audit reports only the standard Lean/Mathlib principles
\texttt{propext}, \texttt{Classical.choice}, and \texttt{Quot.sound}.
This audit certifies the exact mathematical statements and dependency
closures above; bibliographic claims and expository sentences remain
ordinary parts of the manuscript rather than kernel objects.

\end{document}